\newtheorem{proposition}{Proposition}[section]
\newtheorem{lemma}[proposition]{Lemma}
\newtheorem{corollary}[proposition]{Corollary}
\newtheorem{theorem}[proposition]{Theorem}
\newtheorem{conjecture}[proposition]{Conjecture}
\theoremstyle{definition}
\newtheorem{remark}[proposition]{Remark}
\newtheorem{definition}[proposition]{Definition}
\newtheorem{example}[proposition]{Example}
\def\Z{\mathbb{Z}}
\def\G{\Gamma}
\def\C{\mathcal{C}}
\def\Q{\mathcal{Q}}
\begin{document}
\title[The extended irregular domination problem]{The extended irregular domination problem}

\author[L. Mella]{Lorenzo Mella}
\address{Dip. di Scienze Fisiche, Informatiche, Matematiche, Universit\`a degli Studi di Modena e Reggio Emilia, Via Campi 213/A, I-41125 Modena, Italy}
\email{lorenzo.mella@unipr.it}
\author[A. Pasotti]{Anita Pasotti}
\address{DICATAM - Sez. Matematica, Universit\`a degli Studi di Brescia, Via
Branze 43, I-25123 Brescia, Italy}
\email{anita.pasotti@unibs.it}

\keywords{Dominating set; vertex transitive graph; starter}
\subjclass[2020]{05C69, 05C78, 05B99}

\begin{abstract}
In this paper we introduce a new domination problem strongly related to the following  one recently proposed by
Broe, Chartrand and Zhang. One says that a vertex $v$ of a graph $\G$ labeled with an integer $\ell$ dominates the vertices of $\G$ having distance $\ell$ from $v$. An irregular dominating set of a given graph $\G$ is  a set $S$ of vertices of $\G$, having distinct positive labels, whose elements dominate every vertex of $\G$. Since it has been proven that no connected vertex transitive graph admits an irregular dominating set, here we introduce the concept of an \emph{extended} irregular dominating
set, where we admit that precisely one vertex, labeled with 0, dominates itself. Then we present existence or non existence results of an extended irregular dominating set $S$ for several classes of graphs, focusing in particular on the case in which $S$ is as small as possible. We also propose two conjectures.
\end{abstract}

\maketitle

\section{Introduction}
The concept of domination plays an important role in graph theory, having a large variety of applications and being closely related to other topics in graphs, such as, just to make an example, independent sets. This area began with the work of Berge \cite{B} and Ore \cite{O}, but it becomes active only 15 years later thanks to the survey by Cockayne and Hedetniemi \cite{CH}.

In the following, by $\Gamma$ we denote an undirected simple graph with vertex set $V$ and edge set $E$. A set $S \subseteq V$ is said to be a \textit{dominating set} if for every $u \in V$ there exists a vertex $v\in S$ such that $\{u,v\}\in E$. Over the course of the years, various kinds of domination have been introduced and investigated, see \cite{HHH}. In this work, we propose a new variation of a domination problem first considered in \cite{BCZ} and further studied in \cite{ACZ, BCZ21, CZ, MZ22,MZ}, that we report in what follows. 

  A vertex $v$ of $\Gamma$, labeled with a positive integer $\ell$, is said to \textit{dominate}, or \textit{cover}, the vertices of $\Gamma$ having distance $\ell$ from $v$. Similarly, a set $S \subseteq V$ of labeled vertices is said to \textit{dominate} (or \textit{cover}) a vertex $u \in V$ if there exists a  vertex $v\in S$ covering $u$. Then, an \textit{irregular dominating set} of $\G$ is a set $S$ of vertices of $\G$ having distinct positive labels that covers every vertex of $V$. Note that the definition does not require that the labelings are consecutive integers.
As it is standard in the topic of domination, it is interesting to determine the minimum value of $k$ such that an irregular dominating set of cardinality $k$ exists, such a value is denoted by $\gamma_i(\Gamma)$ and it is called the \emph{irregular domination number} of $\G$.

By a counting argument, in \cite{CZ} it has been proven that no connected vertex transitive graph admits an irregular dominating set. However, due to the many symmetries and desirable properties of vertex transitive graphs, it is natural to wonder if there exists a labeling of the vertices of such a graph, different from the standard one, so that it is possible to obtain something similar to an irregular dominating set. Here, we propose the concept of an extended irregular dominating set of a graph, where we admit that precisely one vertex, labeled with $0$, dominates itself. More formally, we give the following new definition.
\begin{definition}
Let $\Gamma= (V,E)$ be an undirected graph and let $k\geq 0$. A $k$-\textit{extended irregular dominating set} $S \subseteq V$ is a set of $k$  vertices having distinct non-negative labelings that covers every vertex of $V$. A labeling $\lambda$ realizing the covering property for $S$ is said to be a $k$-\textit{extended irregular dominating labeling}.
\end{definition}
Clearly, if $\lambda(v)\neq 0$ for every $v\in S$ then we find again the concept of an irregular dominating set/labeling, hence in the following we always assume that there exists a vertex $v\in S$ labeled by $0$ which dominates itself.

We have chosen the term  ``\emph{extended} irregular dominating labeling", since the connection between these and classical irregular dominating labelings is very similar to the one between extended and classical Skolem sequences, see \cite{S} for further details.

We call the minimum cardinality of an extended irregular dominating set of $\G$, denoted by $\gamma_e(\G)$, the \textit{extended irregular domination number} of $\G$.

\begin{example} \label{ex:ext_dom_set}
Here we show an extended irregular dominating labeling inducing an extended irregular dominating set for the cycle of length $6$ and for the cube:
\begin{center}
\includegraphics[width = 0.6\textwidth]{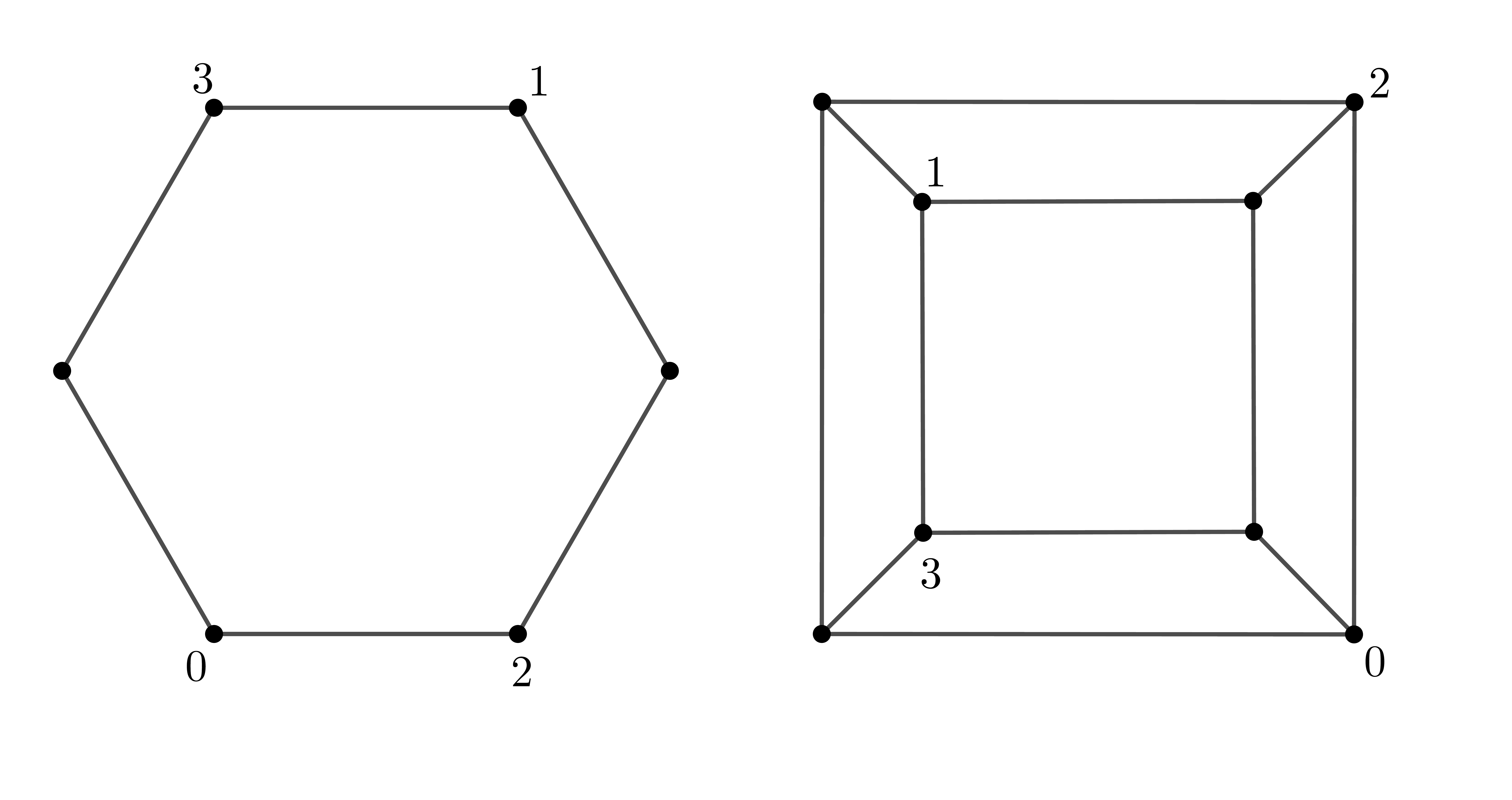}
\end{center}
\end{example}
We underline that a graph does not necessarily admit an extended irregular dominating labeling, take for example the cycle of length three. Clearly, if such a labeling exists, the most interesting problem is that of determining a $k$-extended irregular dominating labeling for a given graph $\G$ with $k$ as small as possible, or in other words that of determining $\gamma_e(\G)$.
We say that a $k$-extended irregular dominating set (labeling, respectively) is \emph{optimal} if $k=\gamma_e(\G)$, that is if there is no $k'$-extended irregular dominating set (labeling, respectively) with $k'<k$. One can easily check that the labelings of Example \ref{ex:ext_dom_set} are optimal.

The paper is organized as follows. In Section \ref{sec:2} we determine $\gamma_e(\G)$ for every vertex-transitive graph $\G$. This allows us to show that, for this class of graph, an extended irregular dominating set is necessarily optimal.  Then, in Section \ref{sec:radius}, we prove the existence or non existence of an optimal extended irregular dominating set for several classes of vertex-transitive graphs. In Section \ref{sec:cycles} we show that the existence of an optimal extended irregular dominating labeling of a cycle of odd length $n$ is equivalent to the existence of a strong starter of $\Z_n$. Then we present some results for an optimal extended irregular dominating labeling of odd cycles obtained as a consequence of known results on strong starters, as well as some new results in the case of cycles of single even length. In Section \ref{sec:paths} we focus on a class of non vertex-transitive graphs: the paths. We point out that, in general, it is not easy to establish the value of $\gamma_e(\G)$ if $\G$ is a non vertex-transitive graph. Here we firstly present a complete answer to the existence problem for an  extended irregular dominating set for this class of graphs, then we give a lower bound for  $\gamma_e(\G)$, $\G$ being a path, and then we establish when this bound is reach. To conclude, in the last section, we propose two conjectures: the first one about the existence of an optimal extended irregular dominating labeling of cycles of length divisible by $4$, while the second regards the value of $\gamma_e(\G)$ when $\G$ is a path.

\section{Preliminary results}\label{sec:2}
In this section, we show that an extended irregular dominating set of a vertex-transitive graph, if it exists, is necessarily optimal.

Firstly we need to introduce some notation and to recall some basic concepts of graph theory. Given two integers $a,b$ with $a\leq b$, by $[a,b]$ we mean the set $\{a,a+1,\ldots,b\}$. The \textit{degree} of a vertex $v$ of $\G$, denoted by $\mathrm{deg}(v)$, is the number of neighbours of $v$ in $\G$. A graph is said to be \textit{regular} if all its vertices have the same degree.
We also recall that the \textit{diameter} of a graph $\Gamma$, denoted by $\mathrm{diam}(\Gamma)$, is the largest distance between any pair of vertices of $\G$. 
\begin{remark} \label{rem:radius_diam}
The labels of an extended irregular dominating set of a graph $\Gamma$, if it exists, can assume value in $[0,\mathrm{diam}(\Gamma)]$
and hence $\gamma_e(\G)\leq \mathrm{diam}(\Gamma)$+1. 
\end{remark}

\begin{proposition}\label{prop:k12}
    A $k$-extended irregular dominating set cannot exist for $k=2,3$.
\end{proposition}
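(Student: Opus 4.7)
The plan is to run a short case analysis on $k=2$ and $k=3$, using only the definition of an extended irregular dominating set and the stipulation that the unique vertex labeled $0$ dominates only itself.

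For $k=2$, I would write $S=\{u,v\}$ with $\lambda(u)=0$ and $\lambda(v)=\ell$ for some positive integer $\ell$. By definition $u$ covers only $u$, so if $v$ is to be covered it must be covered by $v$ itself; but the distance from $v$ to $v$ is $0\neq\ell$, so $v$ cannot cover itself. Hence $v$ is uncovered, contradicting the assumption that $S$ is a dominating set.

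For $k=3$, the set looks like $S=\{u,v_1,v_2\}$ with $\lambda(u)=0$ and $\lambda(v_1)=\ell_1$, $\lambda(v_2)=\ell_2$ distinct positive integers. As before, $u$ covers only itself and $v_i$ cannot cover $v_i$ (distance zero, label positive), so $v_1$ must be covered by $v_2$ and vice versa. This forces $d(v_1,v_2)=\ell_2$ (since $v_2$ covers $v_1$) and simultaneously $d(v_1,v_2)=\ell_1$ (since $v_1$ covers $v_2$), so $\ell_1=\ell_2$, contradicting the distinctness of labels.

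The argument is essentially a short consistency check; there is no real obstacle beyond organizing the bookkeeping that (i) the vertex labeled $0$ can only cover itself, and (ii) a vertex with a positive label cannot cover itself, so each labeled vertex in $S$ of positive label must be covered by a distinct labeled vertex of $S$. These two observations collapse the cases $k=2$ and $k=3$ immediately.
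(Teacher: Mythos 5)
Your argument is correct and follows essentially the same route as the paper: in both cases the key observation is that the vertex labeled $0$ covers only itself and a positively labeled vertex cannot cover itself, so the positively labeled vertices must cover one another, which is impossible for $k=2$ and forces equal labels for $k=3$. Your write-up merely spells out the bookkeeping the paper leaves implicit.
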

\begin{proof}
    When $k=2$, if the label different from $0$ is assigned to a vertex $v$, then it is not possible to dominate $v$ with the remaining label $0$.

    Suppose now $k=3$ and let $v$ and $w$ be the vertices with a non-zero label. It is easy to see that necessarily $w$ has to be dominated by $v$ and vice versa, but this is not possible since we use distinct labelings. 
\end{proof}

\begin{corollary}\label{cor:completo}
    The complete graph $K_n$, with $n>1$, and the complete bipartite graph $K_{m,n}$ do not admit an extended irregular dominating set.
\end{corollary}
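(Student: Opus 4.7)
The plan is to combine the diameter bound of Remark \ref{rem:radius_diam} with the exclusion of $k=2,3$ from Proposition \ref{prop:k12}, forcing the cardinality of any putative extended irregular dominating set into a narrow range that contains no feasible value.

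For $K_n$ with $n>1$, the key observation is that $\mathrm{diam}(K_n)=1$, so by Remark \ref{rem:radius_diam} the labels of any extended irregular dominating set $S$ lie in $\{0,1\}$, whence $|S|\le 2$. The value $|S|=2$ is ruled out by Proposition \ref{prop:k12}. The remaining possibility $|S|=1$ is also impossible: since exactly one vertex of $S$ must be labeled $0$, this single vertex dominates itself only, leaving the other $n-1\ge 1$ vertices uncovered.

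For $K_{m,n}$, a direct check shows $\mathrm{diam}(K_{m,n})\le 2$: vertices on opposite sides of the bipartition are adjacent, and two vertices on the same side are joined by a path of length $2$ through any vertex on the other side. Applying Remark \ref{rem:radius_diam} gives $|S|\le 3$, and Proposition \ref{prop:k12} excludes $|S|\in\{2,3\}$. The case $|S|=1$ fails for the same reason as above, since $K_{m,n}$ has at least two vertices.

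I expect no substantive obstacle in this argument: once the small diameter reduces the problem to finitely many values of $|S|$, the rest is mechanical. The only point that requires a little care is the boundary case $|S|=1$, which is not subsumed by Proposition \ref{prop:k12} and must be handled directly from the convention that the distinguished $0$-labeled vertex dominates only itself.
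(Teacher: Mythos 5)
Your argument is correct and is exactly the route the paper intends: the corollary is stated without proof precisely because it follows from Remark \ref{rem:radius_diam} (diameter at most $2$ bounds $|S|\le 3$) together with Proposition \ref{prop:k12}, and the paper disposes of the $|S|=1$ case in the sentence immediately following the corollary. Nothing further is needed.
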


Clearly, a graph has a $1$-extended irregular dominating set if and only if it is an isolated vertex. Hence, if a non trivial graph $\G$ admits a $4$-extended irregular dominating set
$S$, then $S$ is optimal and $\gamma_e(\G)=4$.

With the following lemma we show an interesting property of vertex-transitive graphs, that implies the well-known result that these graphs are regular.
\begin{lemma} \label{lem:radius_vtx_tran}
Let $\Gamma = (V,E)$ be a vertex-transitive graph. For every vertex $v\in V$ and for every $i \in [0,\mathrm{diam}(\Gamma)]$ let $s_i(v)$ be the  number of vertices having distance $i$ from $v$. Then, the sequence $(s_0(v), \dotsc, s_{\mathrm{diam}(\Gamma)}(v))$ is invariant on the choice of $v$, and $\sum s_i(v) = |V|$. 
\end{lemma}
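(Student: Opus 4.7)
The plan is to exploit the automorphism group action directly. Vertex-transitivity says that for any two vertices $v,w\in V$ there exists a graph automorphism $\varphi$ of $\Gamma$ with $\varphi(v)=w$. Since $\varphi$ is a bijection preserving adjacency, it preserves the graph distance, that is, $d(\varphi(x),\varphi(y))=d(x,y)$ for every $x,y\in V$. This is the engine that will drive everything.

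First I would fix $v,w\in V$ and pick such a $\varphi$. For each $i\in[0,\mathrm{diam}(\Gamma)]$ let $B_i(v)=\{u\in V: d(u,v)=i\}$, so that $s_i(v)=|B_i(v)|$. The distance-preserving property, together with $\varphi(v)=w$, immediately gives $\varphi(B_i(v))\subseteq B_i(w)$, and applying the same argument to $\varphi^{-1}$ yields the reverse inclusion. Hence $\varphi$ restricts to a bijection $B_i(v)\to B_i(w)$, so $s_i(v)=s_i(w)$ for every $i$. Since $v,w$ were arbitrary, the sequence $(s_0(v),\dotsc,s_{\mathrm{diam}(\Gamma)}(v))$ does not depend on the choice of $v$.

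For the second assertion I would simply note that in a connected graph every vertex $u\in V$ lies at a uniquely determined distance $d(u,v)\in[0,\mathrm{diam}(\Gamma)]$ from $v$, so the sets $B_0(v),\dotsc,B_{\mathrm{diam}(\Gamma)}(v)$ form a partition of $V$. Summing cardinalities gives $\sum_i s_i(v)=|V|$.

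There is not really a main obstacle here: the statement is a direct consequence of the definition of vertex-transitivity and the fact that automorphisms are isometries of the path metric. The only point that deserves a brief mention is that we are tacitly working in a connected graph, so that $d(u,v)$ is finite and bounded by $\mathrm{diam}(\Gamma)$ for every $u$; this is consistent with the convention used throughout the paper.
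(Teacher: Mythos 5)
Your proof is correct: the paper states this lemma without proof, and your argument via distance-preserving automorphisms restricting to bijections between the distance classes $B_i(v)\to B_i(w)$, followed by the partition observation, is exactly the standard justification the paper implicitly relies on. Your remark about tacitly assuming connectedness (so that every vertex lies in some $B_i(v)$ with $i\leq\mathrm{diam}(\Gamma)$) is an appropriate and accurate caveat, consistent with the paper's focus on connected vertex-transitive graphs.
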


\begin{theorem} \label{prop:disjoint}
Let $\Gamma = (V,E)$ be a vertex-transitive graph admitting an extended irregular dominating set $S$. Then, for every vertex $u \in V$ there exists a unique vertex $v \in S$ covering $u$.
\end{theorem}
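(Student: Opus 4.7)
The plan is a double-counting argument based on the uniformity guaranteed by Lemma~\ref{lem:radius_vtx_tran}. For each $u\in V$, let $c(u)=|\{v\in S:d(u,v)=\lambda(v)\}|$ denote the number of vertices of $S$ that cover $u$, so that the conclusion we want is $c(u)=1$ for every $u\in V$. Since $S$ is a dominating set we already have $c(u)\ge 1$ for all $u$, hence $\sum_{u\in V}c(u)\ge |V|$; the whole proof therefore reduces to the reverse inequality.

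To establish $\sum_{u\in V}c(u)\le |V|$, I would exchange the order of summation and count, for each $v\in S$, how many vertices it covers:
\[
\sum_{u\in V}c(u)=\sum_{v\in S}\bigl|\{u\in V:d(u,v)=\lambda(v)\}\bigr|=\sum_{v\in S}s_{\lambda(v)}(v).
\]
By Lemma~\ref{lem:radius_vtx_tran}, the value $s_i(v)$ depends only on $i$, so writing $s_i$ for the common value and splitting off the contribution of the vertex labelled $0$ (which equals $s_0=1$), the right-hand side becomes $1+\sum_{\ell\in L^*}s_\ell$, where $L^*$ is the set of non-zero labels used by $S$. By Remark~\ref{rem:radius_diam} we have $L^*\subseteq[1,\mathrm{diam}(\Gamma)]$, and since the $s_i$ are non-negative the distinctness of the labels gives the bound
\[
\sum_{u\in V}c(u)\le 1+\sum_{i=1}^{\mathrm{diam}(\Gamma)}s_i.
\]
Applying Lemma~\ref{lem:radius_vtx_tran} again yields $1+\sum_{i=1}^{\mathrm{diam}(\Gamma)}s_i=\sum_{i=0}^{\mathrm{diam}(\Gamma)}s_i=|V|$, which is exactly what is needed.

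Combining the two inequalities forces $\sum_{u\in V}c(u)=|V|$, and together with $c(u)\ge 1$ everywhere this yields $c(u)=1$ for every $u\in V$. I do not foresee a real obstacle: the essential point is to recognise that the double count produces exactly the unit of slack $s_0=1$, which is perfectly absorbed by the single vertex labelled $0$ covering only itself. In other words, the argument is a refinement of the counting used in \cite{CZ} to rule out classical irregular dominating sets in vertex-transitive graphs, the extra $0$-label providing precisely the missing unit that makes the count consistent and, simultaneously, forces the covering to be a partition.
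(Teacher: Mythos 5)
Your proposal is correct and is essentially the paper's own argument: both are the same double count of covering incidences, using Lemma~\ref{lem:radius_vtx_tran} to see that the labelled vertices can reach at most $\sum_{i=0}^{\mathrm{diam}(\Gamma)} s_i = |V|$ vertices counted with multiplicity, and the domination hypothesis to force equality with multiplicity one everywhere. The paper phrases the slack as $\sum_j (m_j-1)$ over the multiply-covered vertices rather than as the two inequalities on $\sum_u c(u)$, but the content is identical.
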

\begin{proof}
 From Remark \ref{rem:radius_diam} we have that an extended irregular dominating labeling of $\Gamma$ takes values in $[0,\mathrm{diam}(\Gamma)]$.
Let $S$ be an extended irregular dominating set and $W = \{w_1,\dotsc, w_a\}$ be the set of vertices that are covered by at least two vertices  of $S$. We have to prove that $W=\emptyset$. For every $j \in [1,a]$, let $m_j \geq 2$ be the number of vertices of $S$ covering $w_j$. Then, by Lemma \ref{lem:radius_vtx_tran}, the number of vertices covered by $S$ is given by:
\[
|V| - \sum_{j=1}^a (m_j-1) \leq |V| - a.
\]
Since $S$ is an extended irregular dominating set, we deduce that necessarily $a=0$, hence every vertex of $\Gamma$ is covered by precisely one vertex of $S$.
\end{proof}
As a consequence, we have that if there exists a $k$-extended irregular dominating set of a vertex-transitive graph $\G$, then $\G$ cannot admits a $k'$-extended irregular dominating set with $k'\neq k$, otherwise at least one vertex should be dominated more than once. By the above arguments and by Lemma \ref{lem:radius_vtx_tran} we have the following.
\begin{corollary}\label{cor:k=radius}
A $k$-extended irregular dominating set $S$ of a vertex-transitive graph $\G$, if it exists, is optimal and $k=\gamma_e(\G)=\mathrm{diam}(\G)+1$. Moreover, the $k$ vertices of $S$ are labeled with all the elements in $[0,\mathrm{diam}(\G)]$. 
\end{corollary}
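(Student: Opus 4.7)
The plan is to assemble the pieces that were just proved: Theorem \ref{prop:disjoint} says the coverings are pairwise disjoint, Lemma \ref{lem:radius_vtx_tran} pins down exactly how many vertices each element of $S$ covers, and Remark \ref{rem:radius_diam} constrains where the labels can live. Put together, these force $|S|$ and the multiset of labels.

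First, by Theorem \ref{prop:disjoint}, if $S$ is an extended irregular dominating set of a vertex-transitive graph $\Gamma$ with associated labeling $\lambda$, then every vertex of $\Gamma$ is covered by exactly one vertex of $S$. Consequently
\[
|V| = \sum_{v \in S} s_{\lambda(v)}(v),
\]
and by Lemma \ref{lem:radius_vtx_tran} the quantity $s_i(v)$ does not depend on $v$, so I will write it simply as $s_i$. Hence
\[
\sum_{v \in S} s_{\lambda(v)} \;=\; |V| \;=\; \sum_{i=0}^{\mathrm{diam}(\Gamma)} s_i,
\]
where the second equality is again Lemma \ref{lem:radius_vtx_tran}.

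Second, by Remark \ref{rem:radius_diam} the labels $\lambda(v)$ for $v\in S$ form a subset of $[0,\mathrm{diam}(\Gamma)]$, and they are pairwise distinct by definition of an extended irregular dominating labeling. I then need the minor observation that $s_i\geq 1$ for every $i\in[0,\mathrm{diam}(\Gamma)]$: this is automatic since $\Gamma$ is connected (the distances from any fixed vertex form a set of consecutive integers starting at $0$ and reaching the eccentricity of that vertex, which by vertex-transitivity equals $\mathrm{diam}(\Gamma)$). Given this positivity, the equality of the two sums above together with distinctness of the $\lambda(v)$ forces $\{\lambda(v):v\in S\}=[0,\mathrm{diam}(\Gamma)]$: if any value were missing, the left-hand sum would be strictly smaller than the right-hand one. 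This immediately yields $k=|S|=\mathrm{diam}(\Gamma)+1$ and gives the ``moreover'' clause.

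Finally, optimality: the argument above applies to every extended irregular dominating set of $\Gamma$, so any such set has size exactly $\mathrm{diam}(\Gamma)+1$; in particular, $S$ attains the minimum, i.e.\ $\gamma_e(\Gamma)=\mathrm{diam}(\Gamma)+1$. The only mild subtlety I foresee is justifying $s_i\geq 1$ for every $i\in[0,\mathrm{diam}(\Gamma)]$, which is why I make that observation explicit before concluding; everything else is a direct bookkeeping consequence of the two results proved just above.
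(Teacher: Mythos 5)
Your proof is correct and follows essentially the same route the paper takes (the paper leaves this corollary without a written proof, deriving it from Theorem \ref{prop:disjoint}, Lemma \ref{lem:radius_vtx_tran} and Remark \ref{rem:radius_diam} exactly as you do). Your explicit justification that $s_i\geq 1$ for every $i\in[0,\mathrm{diam}(\Gamma)]$ is a welcome detail the paper glosses over.
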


\section{Some results obtained using the diameter of the graph}\label{sec:radius}
In what follows, we show some existence and non-existence results of extended irregular dominating sets for vertex-transitive graphs in which the diameter of the graph plays a crucial role. 

First note that as a consequence of Proposition \ref{prop:k12}, we have the following.
\begin{proposition}
Let $\Gamma$ be a vertex-transitive graph such that $\mathrm{diam}(\Gamma) \in \{1,2\}$. Then, there does not exist  an extended irregular dominating set in $\Gamma$.  
\end{proposition}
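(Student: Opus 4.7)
The plan is to deduce this immediately by combining the two previously established results: Corollary \ref{cor:k=radius} and Proposition \ref{prop:k12}.

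First, I would argue by contradiction: suppose $S$ is an extended irregular dominating set of the vertex-transitive graph $\G$ with $\mathrm{diam}(\G)\in\{1,2\}$. Then Corollary \ref{cor:k=radius} forces $|S|=\gamma_e(\G)=\mathrm{diam}(\G)+1$, so $|S|\in\{2,3\}$. Next, I would invoke Proposition \ref{prop:k12}, which asserts that a $k$-extended irregular dominating set cannot exist for $k=2$ or $k=3$. This contradicts the existence of $S$, completing the argument.

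There is really no obstacle here: the whole point is that Corollary \ref{cor:k=radius} pins down the cardinality of any extended irregular dominating set in the vertex-transitive setting to be exactly $\mathrm{diam}(\G)+1$, which falls into the forbidden range of Proposition \ref{prop:k12} precisely when $\mathrm{diam}(\G)\in\{1,2\}$. So the proof is a two-line citation, and the statement is best presented as a corollary of the preceding material rather than requiring any genuinely new computation.
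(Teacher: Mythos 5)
Your proof is correct and matches the paper's intent: the paper states this proposition without proof, merely as "a consequence of Proposition \ref{prop:k12}", and your two-line argument (cardinality is $\mathrm{diam}(\G)+1\in\{2,3\}$ by Corollary \ref{cor:k=radius}, which is forbidden by Proposition \ref{prop:k12}) is exactly the intended filling-in of that remark. The only cosmetic note is that the weaker Remark \ref{rem:radius_diam} (labels lie in $[0,\mathrm{diam}(\G)]$, so $|S|\leq 3$, and $|S|\geq 2$ since the graph is not a single vertex) already suffices, without invoking the full strength of Corollary \ref{cor:k=radius}.
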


We consider now graphs with diameter equal to three. Firstly, we recall the definition of the crown graph. Let $K_{n,n}$ be the complete regular bipartite graph on $2n$ vertices, and denote by $A = \{a_1,a_2,\dotsc, a_n\}$ and $B = \{b_1,b_2,\dotsc, b_n\}$ the two parts of $K_{n,n}$. Note that $M = \{\{a_i,b_i\}: i \in [1,n]\}$ is a perfect matching of $K_{n,n}$. Given an integer $n\geq 3$, the \textit{crown graph of order $2n$} is the graph $\Gamma = K_{n,n}\setminus M$. We recall that these graphs are vertex-transitive, and have diameter equal to three. We are going to prove that this is the unique class of bipartite vertex-transitive graphs with diameter $3$ admitting an extended irregular dominating set.
\begin{proposition}
The crown graph  admits an optimal extended irregular dominating set.
\end{proposition}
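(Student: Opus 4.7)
The plan is to produce an explicit $4$-extended irregular dominating set, which will be automatically optimal thanks to Corollary~\ref{cor:k=radius}, since the crown graph is vertex-transitive of diameter $3$.

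First I would spell out the distance profile in the crown graph $\Gamma = K_{n,n}\setminus M$, with parts $A=\{a_1,\dotsc,a_n\}$ and $B=\{b_1,\dotsc,b_n\}$. Fixing $a_i$ as the reference vertex, a direct check using the bipartite structure gives: the only vertex at distance $0$ is $a_i$; the vertices at distance $1$ are exactly $\{b_j : j\neq i\}$ ($n-1$ vertices); those at distance $2$ are exactly $\{a_j : j\neq i\}$ ($n-1$ vertices, reached through any $b_k$ with $k\neq i,j$, which exists since $n\geq 3$); and finally $b_i$ sits at distance $3$, since in a bipartite graph any $a_i$–$b_i$ path has odd length and the edge $\{a_i,b_i\}$ is missing. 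Thus $(s_0,s_1,s_2,s_3)=(1,n-1,n-1,1)$, matching Lemma~\ref{lem:radius_vtx_tran}.

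Now by Corollary~\ref{cor:k=radius}, any extended irregular dominating set must have exactly $4$ vertices carrying the labels $0,1,2,3$, and by Theorem~\ref{prop:disjoint} each vertex of $\Gamma$ must be covered exactly once. Accounting for the sizes $(1,n-1,n-1,1)$ forces the label-$0$ and label-$3$ vertices to cover singletons while the label-$1$ and label-$2$ vertices must each cover a whole $(n-1)$-shell. A natural attempt is to place the two singleton-covering labels on vertices whose distance-$3$ partners are controlled, and to choose the label-$1$ and label-$2$ vertices so that their neighborhoods partition the remaining $2n-2$ vertices cleanly.

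Concretely, I would set
\[
\lambda(a_1)=0,\qquad \lambda(b_1)=1,\qquad \lambda(b_2)=2,\qquad \lambda(a_2)=3,
\]
and check the six coverage conditions: $a_1$ is covered by itself; each $a_j$ with $j\geq 2$ is adjacent to $b_1$, hence covered by $b_1$ at distance $1$; each $b_j$ with $j\geq 3$ lies at distance $2$ from $b_2$, hence is covered by $b_2$; the vertex $b_1$ is at distance $2$ from $b_2$, so it too is covered by $b_2$; and finally $b_2$ lies at distance $3$ from $a_2$, so it is covered by $a_2$. Every vertex is covered exactly once, so $S=\{a_1,b_1,b_2,a_2\}$ is a $4$-extended irregular dominating set, which by Corollary~\ref{cor:k=radius} is optimal with $\gamma_e(\Gamma)=4$.

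There is no real obstacle here beyond finding the construction; the only subtlety is the $n\geq 3$ assumption built into the definition of the crown graph, which is needed both for the distance-$2$ vertices to exist and for the labels $1$ and $2$ to be placed on distinct vertices outside $\{a_1,a_2\}$.
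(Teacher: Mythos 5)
Your proposal is correct and is essentially the paper's own proof: the paper assigns $\lambda(a_i)=0$, $\lambda(b_i)=1$, $\lambda(b_j)=2$, $\lambda(a_j)=3$ for two non-adjacent pairs, which is exactly your labeling with $i=1$, $j=2$, verified the same way. Your extra discussion of the distance profile $(1,n-1,n-1,1)$ is a helpful sanity check but not a different argument.
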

\begin{proof}
Let $\Gamma$ be a crown graph, and pick  any two pairs of non-adjacent vertices $(a_i,b_i)$ and $(a_j,b_j)$, and let $\lambda: \{a_i,a_j,b_i,b_j\} \rightarrow [0,3]$ be the following labeling:
$$\lambda(a_i) = 0,\ \ \lambda(a_j) =3,\ \ \lambda(b_i) = 1,\ \  \lambda(b_j) =2.$$
We have that $b_i$ dominates $A \setminus \{a_i\}$, while  $b_j$ dominates $B \setminus \{b_j\}$. Trivially, $a_i$ and $a_j$  dominate itself and $b_j$, respectively, thus concluding the proof.
\end{proof}

\begin{theorem}
Let $\Gamma$ be a vertex-transitive bipartite graph with $\mathrm{diam}(\Gamma) = 3$. Then, $\Gamma$ admits an optimal extended irregular dominating set if and only if it is a crown graph.
\end{theorem}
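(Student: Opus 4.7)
The plan is to handle only the nontrivial direction, since the converse is the previous proposition. Suppose $\Gamma$ is a bipartite vertex-transitive graph with $\mathrm{diam}(\Gamma)=3$ admitting an extended irregular dominating set $S$. I would first collect structural data: $\Gamma$ is connected (since the diameter is finite) and vertex-transitive, hence $d$-regular. A connected bipartite graph has a unique bipartition $V=A\sqcup B$, and any transitive action on $V$ must move some vertex across the bipartition, so some automorphism swaps $A$ and $B$; thus $|A|=|B|=n$. The values $d=1$ and $d=n$ correspond to $\Gamma\in\{K_2,K_{n,n}\}$, whose diameters are not $3$, so $2\leq d\leq n-1$. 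By Corollary \ref{cor:k=radius}, $|S|=4$ with label set $\{0,1,2,3\}$; let $v_i\in S$ carry label $i$. By Theorem \ref{prop:disjoint} the four dominated sets partition $V$, and by Lemma \ref{lem:radius_vtx_tran} the distance distribution from any vertex is $(1,d,n-1,n-d)$.

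Next I would set up a single counting equation. Because distances in a bipartite graph have parity equal to the parity of the labels $0,1,2,3$, the vertices $v_0$ and $v_2$ dominate vertices in their own part while $v_1$ and $v_3$ dominate vertices in the opposite part. Writing $\epsilon_0=[v_0\in A]$, $\epsilon_1=[v_1\in B]$, $\epsilon_2=[v_2\in A]$, $\epsilon_3=[v_3\in B]$, summing the contributions of $S$ to $A$ yields
\[
\epsilon_0+\epsilon_1 d+\epsilon_2(n-1)+\epsilon_3(n-d)=n.
\]
A brief inspection of the sixteen $\{0,1\}$-vectors, together with $2\leq d\leq n-1$ and $n\geq 3$, leaves exactly two families of feasible configurations: (A) all four vertices of $S$ lie in the same part; (B) $\{v_0,v_3\}$ and $\{v_1,v_2\}$ split between the two parts, and $d=n-1$.

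To conclude, I would eliminate (A) via a second use of Theorem \ref{prop:disjoint}. If all of $S$ lies in the same part, then $v_0$ and $v_2$ are distinct vertices in the same part of a bipartite graph of diameter $3$, so their distance is a positive even integer at most $3$, hence exactly $2$; but then $v_2$ dominates $v_0$, while $v_0$ also dominates itself via its label $0$, contradicting the disjointness of the covering. Only (B) survives, forcing $d=n-1$. To identify $\Gamma$ as a crown graph, observe that with $d=n-1$ each $a\in A$ has a unique non-neighbour in $B$ and, by regularity, each $b\in B$ has a unique non-neighbour in $A$; the non-edges thus form a $1$-regular bipartite graph, that is, a perfect matching $M$, whence $\Gamma=K_{n,n}\setminus M$.

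The main obstacle is controlling the case analysis of the indicator equation: several spurious solutions need to be pruned using the bounds on $d$, and it is easy to overlook one. The key conceptual point is that the counting alone cannot distinguish the genuine crown configurations from the ``all-in-one-part'' ones; Theorem \ref{prop:disjoint} must be invoked a second time, through a bipartite parity argument on the pair $(v_0,v_2)$, to discard the latter.
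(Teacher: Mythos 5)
Your proof is correct, and it takes a genuinely different route from the paper's. The paper anchors the argument at the vertex $u$ carrying label $1$, partitions $V$ by distance from $u$, and performs a case analysis on whether $u$ is covered from distance $2$ or distance $3$: the first case is pushed through degree computations to the crown graph, and the second is killed by two ad hoc contradictions. You instead exploit the bipartite structure globally: the label parities decide which side each dominated set lands in, Theorem~\ref{prop:disjoint} turns the four dominated sets into a partition of $V$, and the distance distribution $(1,d,n-1,n-d)$ (which you compute directly from regularity, bipartiteness, $|A|=|B|$ and $\mathrm{diam}=3$, a sharper use of the setting than the paper's appeal to Lemma~\ref{lem:radius_vtx_tran}) reduces everything to one indicator equation. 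I verified the enumeration: with $2\leq d\leq n-1$ the only surviving sign patterns are $(1,0,1,0)$, $(0,1,0,1)$ (your family (A), no constraint on $d$) and $(1,1,0,0)$, $(0,0,1,1)$ (your family (B), forcing $d=n-1$), and your parity argument on $d(v_0,v_2)=2$ correctly disposes of (A). What your approach buys is a cleaner separation of the combinatorics (one linear equation) from the single genuinely graph-theoretic step (eliminating (A)), and it lands directly on $d=n-1$, i.e.\ the complement of the edge set is a perfect matching, rather than reconstructing the crown graph from degree counts; what it costs is the sixteen-case check, which is mechanical but, as you note, easy to get wrong if the bounds on $d$ are applied carelessly. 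Both arguments lean on Theorem~\ref{prop:disjoint} and Corollary~\ref{cor:k=radius} in the same essential way.
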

\begin{proof}
Let $\Gamma $ be a vertex-transitive bipartite graph with $\mathrm{diam}(\Gamma) = 3$, and assume that it admits a $k$-extended irregular dominating labeling. Recall that, by Corollary \ref{cor:k=radius}, we have $k=4$. Let $u$ be the vertex having label $1$, and partition the vertex set of $\G$ into the sets $S_0,\,S_1,\,S_2,\,S_3$, where $v \in S_i$ if and only if $d(u,v) = i$ (in particular, $S_0 = \{u\}$). Clearly, all the vertices in $S_1$ are dominated by $u$. We split the proof into two cases:

\textbf{Case 1:} $u$ is dominated by a vertex $v$ in $S_2$ having label $2$.\\
By vertex-transitivity, $v$ dominates $|S_2|$ vertices, hence, since $\Gamma$ is bipartite, by a counting reasoning it dominates $S_0 \cup S_2 \setminus \{v\}$. Clearly $v$ must be dominated by the vertex having label $3$, and since every other vertex in $S_0 \cup S_2 \setminus \{v\}$ is already covered, by vertex-transitivity we have $|S_3| = 1$. Call then $w$ the vertex having label $3$. Clearly, $w \not \in S_3$, otherwise it would dominate $u$, hence $w \in S_1$. Since $\mathrm{diam}(\Gamma) = 3$, every vertex in $S_2$ has either distance $1$ or $3$ from $w$, and from $|S_3| = 1$ and vertex-transitivity we deduce that $v$ is the unique vertex having distance $3$ from $w$. Thus, $w$ is adjacent to every vertex of $S_2 \setminus \{v\}$. We then have $\mathrm{deg}(w) = |S_0|+ |S_2\setminus\{v\}| = |S_2|$, and since $\Gamma$ is regular and bipartite $\mathrm{deg}(w) = |S_2| = |S_1| = \mathrm{deg}(u)$. It then follows that $\Gamma$ is a regular bipartite graph on $2(|S_1|+1)$ vertices, having degree $|S_1|$, hence $\Gamma$ is a crown graph.

\textbf{Case 2:} $u$ is dominated by a vertex $v$ in $S_3$ having label $3$.\\
Since $\Gamma$ is bipartite, $v$ dominates $u$ and $|S_3|-1$ vertices of $S_2$, hence to dominate the vertices in $S_3$ (in particular $v$) we use the remaining labels $0$ and $2$. Let $w$ be the vertex having label $2$.
\begin{itemize}
\item If $w \in S_1$, then $w$ must be adjacent to every vertex in $S_1$, otherwise there would be a vertex that is dominated by both $u$ and $w$. If $|S_1|>1$, the graph $\Gamma$ would not be bipartite, while if $|S_1| = 1$, that is if $S_1 = \{w\}$, then $\Gamma$ would be a regular graph of degree $1$, that is the path of length $1$, and $\mathrm{diam}(\Gamma) = 1 \neq 3$. In any case, we reach a contradiction.
	\item If $w \in S_3$, then there exists a path $P$ realizing the minimum distance between $u$ and $w$, namely $P = [u, s_1,s_2 ,w]$, with $s_1\in S_1$ and $s_2\in S_2$. We have then that $u$ and  $w$ dominate $s_1$, hence by Theorem \ref{prop:disjoint} it is not an extended irregular dominating labeling.
\end{itemize}
Hence, Case 2 cannot occur.
\end{proof}

In the remaining part of this section we present a complete solution for the existence problem of an optimal extended irregular dominating set in the case of hypercubes and Mobius ladders.

We recall that given a positive integer $n$, the \textit{hypercube} of dimension $n$, that we denote here by $\Q_n$, is the graph whose vertex set is identified by the sequences in $\{0,1\}^n$, and whose edges connect vertices having Hamming distance equal to $1$. Clearly, $\Q_n$ is a bipartite graph with diameter $n$.

\begin{theorem}
The $n$-dimensional hypercube admits an optimal extended irregular dominating set if and only if $n=0,3$. 
\end{theorem}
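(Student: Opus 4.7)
The plan is to split on $n$. The cases $n=0$ (the single vertex labelled $0$), $n\in\{1,2\}$ (ruled out by the preceding Proposition, since $\mathrm{diam}(\Q_n)=n\le 2$), and $n=3$ (Example~\ref{ex:ext_dom_set}) are immediate, so the substantive work is to exclude $n\ge 4$.

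Assume for contradiction that an optimal extended irregular dominating set $S=\{v_0,\ldots,v_n\}$ exists in $\Q_n$, with $v_j$ carrying label $j$. By Corollary~\ref{cor:k=radius} and Theorem~\ref{prop:disjoint}, the distance spheres $S_j(v_j):=\{u\in V(\Q_n):d(u,v_j)=j\}$ partition $V(\Q_n)=\{0,1\}^n$. Translating, we may assume $v_0=\mathbf 0$, and set $a_j:=\mathrm{wt}(v_j)$; then $a_0=0$ and $a_j\ne j$ for every $j\ge 1$ (else $v_j$ would also cover $v_0$). Since $\Q_n$ is bipartite by weight parity, each $S_j(v_j)$ sits in the bipartition class determined by the parity of $a_j+j$; equating class sizes gives
\[
\sum_{j=0}^{n}(-1)^{a_j+j}\binom{n}{j}=0.
\]
Refining by Hamming weight of vertices around $v_0$, the weight-$1$ vertices covered by $v_j$ number $j+1$ when $a_j=j+1$, $n-j+1$ when $a_j=j-1$, and $0$ otherwise (with total $n$); analogously, the weight-$2$ vertices covered by $v_j$ number $\binom{j+2}{2}$ when $a_j=j+2$, $\binom{n-j+2}{2}$ when $a_j=j-2$, and $0$ otherwise (with total $\binom{n}{2}$).

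The main obstacle is a case analysis showing these conditions are jointly infeasible whenever $n\ge 4$. For each such $n$, I would enumerate the parity patterns compatible with the bipartite identity, then the admissible weight-$1$ assignments, and kill each surviving candidate either by weight-$2$ overcounting or by a vertex collision (two $v_j$'s forced to coincide with the unique weight-$n$ vertex $1^n$). For instance, when $n=4$ the bipartite identity forces every $a_j$ to be even; the weight-$1$ equation then forces $a_1=a_3=2$, whereupon the weight-$2$ coverage totals $7\ne\binom{4}{2}=6$. For $n\ge 5$ the admissible parity patterns proliferate, but each sub-case is ruled out by the same mechanism of parity, weight-$1$ tightness, weight-$2$ tightness, and the pigeonhole on weight-$n$ vertices.
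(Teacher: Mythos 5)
Your reduction of the easy cases and your setup for $n\ge 4$ are sound; in fact your treatment of $n=4$ (the bipartite identity forces every $a_j$ even, the weight-$1$ count then forces $a_1=a_3=2$, $a_2=4$, $a_4=2$, and the weight-$2$ count overshoots to $7\ne 6$) is a complete argument, arguably cleaner than the paper's ``direct check'' on its figure of $\Q_4$. The necessary conditions you extract --- the signed binomial identity from bipartiteness, the weight-$1$ and weight-$2$ coverage equations, $a_j\ne j$, and distinctness of the $v_j$ --- are all correctly derived from the fact (Theorem \ref{prop:disjoint}) that the spheres $S_j(v_j)$ partition $V(\Q_n)$.

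For $n\ge 5$, however, what you have written is a plan, not a proof. ``For each such $n$, I would enumerate the parity patterns\dots'' describes a finite check to be carried out separately for each $n$; since there are infinitely many values of $n\ge 5$, this establishes nothing unless you supply a uniform argument showing your system of constraints is infeasible for \emph{every} such $n$, and you do not. It is also not evident that such a uniform argument exists within your framework: the admissible parity patterns and weight-$1$/weight-$2$ decompositions proliferate with $n$, and your constraints only record coverage of the vertices of Hamming weight $0,1,2$ and $n$ around $v_0$; it is conceivable that for some $n$ a weight vector $(a_0,\dots,a_n)$ satisfies all of them, forcing you to bring in weight-$3$ or higher coverage, at which point the shape of the case analysis changes. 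The paper instead argues uniformly in $n$: it centres the analysis on the vertex $v$ carrying label $4$, notes that $v$ covers all of the sphere of radius $4$ about it, and then shows that whichever label $d$ is used to cover $v$, one can write down an explicit vertex of that sphere (or of $S_2$) that is covered twice --- a single argument with finitely many sub-cases ($2\le d\le n-2$, $d\in\{1,n-1\}$, $d=n$) valid for all $n>4$ at once. To repair your proof you would need an analogous $n$-independent contradiction, e.g.\ an explicitly constructed doubly-covered vertex extracted from any hypothetical solution of your counting system.
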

\begin{proof}
The existence of an extended irregular dominating set is trivial for $\Q_0$, which consists of a single vertex, and Example \ref{ex:ext_dom_set} shows the case $\Q_3$. Also, the non existence for $\Q_1$ and $\Q_2$ follows from Corollary \ref{cor:completo}.

Consider now $\Q_4$ shown in Figure \ref{Q4}. It is not restrictive to assume that the vertex $v=(0,0,0,0)$ has label $2$. A direct check shows that $v$ cannot be covered by a vertex having labels $1$ or $3$, otherwise there should be some vertex covered twice. Hence it must be covered by a vertex having label $4$, that has to be $w=(1,1,1,1)$. It is then easy to see that this cannot be completed to an extended irregular dominating set. 
\begin{figure}
\begin{center}
\includegraphics[width = 0.5\textwidth]{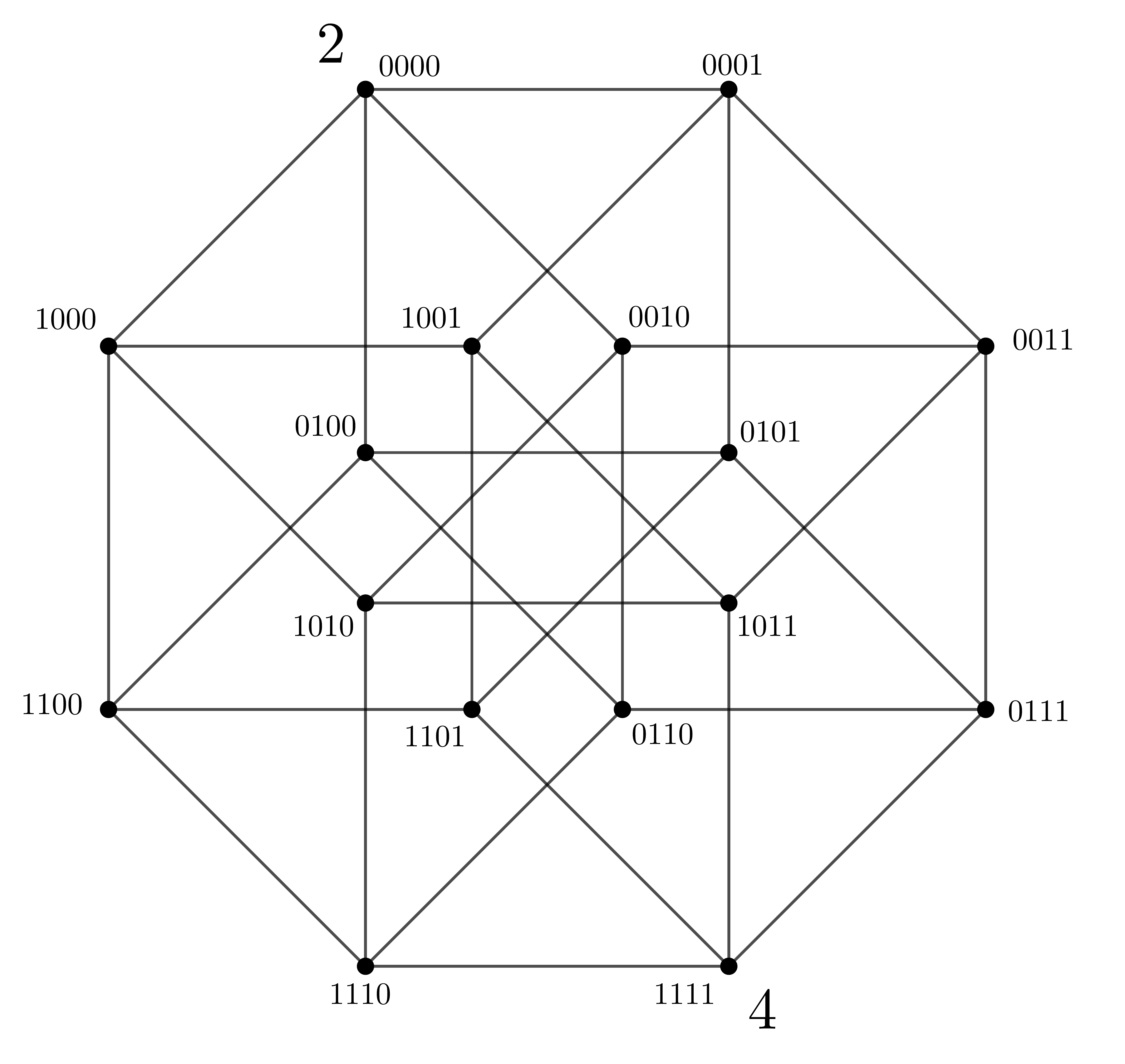}
\caption{The hypercube $\Q_4$.}\label{Q4}
\end{center}
\end{figure}

Assume now  that there exists an irregular dominating set in $\Q_n$ for some integer $n > 4$.
For any fixed vertex $v$ of $\Q_n$, and for every $d\in[0,n]$ let $S_d$ denote the set of vertices in $\Q_n$ having distance $d$ from $v$, where it is understood that $S_0=\{v\}$. It is  easy to see that $|S_d| = \binom{n}{d}$, and that if $u \in S_d$ for some $d \in [1,n]$, then $N_{\Q_n}(u) \subseteq S_{d-1} \cup S_{d+1}$ where $N_{\Q_n}(u)$ denotes the set of vertices of $\Q_n$ adjacent to $u$.

 Without loss of generality assume that  $v = (0,0,\dotsc,0)$ is the vertex having label $4$. We then have that $\Q_n \setminus S_4$ has two connected components, and in particular the one containing $v$, say $C$, has $1+n + \binom{n}{2}+ \binom{n}{3}$ vertices. If $A$ is the part of the bipartition of $\Q_n$ containing $v$, then $|A \cap C| = 1+ \binom{n}{2}$.  

Assume that the vertex $v$ is dominated by a vertex $w = (w_1,\dotsc, w_n)$ having label $d$: this implies that $w \in S_d$, thus there is a $d$-set $I = \{i_1,\dotsc, i_d\}$ such that $w_{i}=1$ if and only if $i \in I$. Now, if $2 \leq d \leq n-2$, let $i_1,i_2,j_1,j_2$ be four distinct indexes, with $i_1,i_2 \in I$ and $j_1,j_2 \not \in I$; let $z = (z_1,\dotsc,z_n)$ be the vertex having coordinates:
\[
z_i = \left\{
\begin{aligned}
&1 & \text{ if $i \in \{i_1,i_2,j_1,j_2\}$},\\
&0 & \text{otherwise}.
\end{aligned}
\right.
\] 
It is easy to see that $z \in S_4$, and that the Hamming distance between $z$ and $w$ is $d$, hence $z$ is dominated by $v$ and $w$, and the labeling does not induce an extended dominating set. 

We now show that it is not possible to dominate $v$ using the labels $d= 1$ or $d= n-1$. Let then $w$ in $S_1$ be the vertex that dominates $v$, having label $1$ (it is easy to see that the case where the label is $n-1$ is analogous to this one). If, without loss of generality, $w = (w_1,w_2,\dots, w_n) = (1,0,\dotsc, 0)$, then $w$ covers the $n-1$ vertices of $S_2$ having the first coordinate equal to $1$. We show that it is not possible to cover the remaining vertices of $S_2$, with a vertex $u$ having label $\ell$. As a first remark, the label $\ell = 2$ cannot be used to cover the remaining vertices of $S_2$; indeed, it is easy to see that this implies $u = (u_1,\dotsc, u_n) \in S_4$, hence,
since $n >4$, then there exists at least one component $u_i = 0$. If $u_j =1$ for some $j \in [1,n]$, then the vertex $z = (z_1,\dotsc, z_n)$ having $z_m = u_m$ for each $m \in [1,n] \setminus \{i,j\}$, $z_i = 1$ and $z_j = 0$ has distance $2$ from $u$, but belongs to $S_4$, thus it is dominated twice.

Let then $u$ be labeled with $\ell >2$. Clearly, $u \not \in S_\ell$, otherwise it would dominate $v$ another time,  and if $u \in S_m$, with $m \leq \ell -4$ or $m \geq \ell+4$, then $u$ does not dominate any vertex of $S_2$. Then:
\begin{itemize}
\item if $u \in S_{\ell-2}$, then there are precisely $\ell -2\geq 1$ indexes $i_1,\dotsc, i_{\ell-2}$ such that $u_{i_j} =1$ for $j \in [1,\ell-2]$, and $0$ otherwise. In particular, if $n-1 > \ell$, then there are at least three zero entries $u_a, \,u_b, \,u_c$ (indeed, two of them are required to dominate vertices of $S_2$, while the third one is ensured by $n-1 > \ell$). Let then $z = (z_1,\dotsc, z_n)$ be the vertex such that $z_m = 1$ if and only if $m \in \{a,b,c,i_1\}$: $z$ belongs to $S_4$ and has distance $\ell$ from $u$, thus it is dominated twice. Now, if $\ell = n$, then $u$ would dominate precisely one vertex of $S_2$, but the vertices of $S_2$ that are not dominated by $w$ are:
\[
\binom{n}{2} - (n-1) = \frac{n(n-3)}{2} +1.
\]
It would be then necessary to use other labels to cover the remaining vertices of $S_2$, thus returning in one of the other cases.
\item if $u \in S_{\ell+2}$, then there are precisely $\ell +2\geq 3$ indexes $i_1,\dotsc, i_{\ell+2}$ such that $u_{i_j} =1$ for $j \in [1,\ell-2]$, and $0$ otherwise. If $\ell \leq n-3$, then there is at least one zero entry  $u_a$: if $z= (z_1,\dotsc, z_n)$ is the vertex such that $z_m = 1$ if and only if $m \in \{a,i_1,i_2,i_3\}$, then $z \in S_4$ and is dominated twice. If $\ell = n-2$, necessarily $u \in S_n$, but then $u$ would dominate the whole set $S_2$, that is already partially covered by $w$.
\end{itemize}

To conclude, assume that $v$ is dominated by the vertex $w = (1,\dotsc,1) \in S_n$ having label $n$.  Assume now that some of the vertices of $S_2$ are dominated by a vertex $u = (u_1,\dotsc, u_n)$ having label $\ell$. Clearly, $\ell \neq 1$, otherwise $u$ would cover either $v$ or vertices of $S_4$. If $\ell = 2$, since $n >4$ there are vertices in $S_4$ that are dominated by $u$ (see above). 
If $\ell > 2$, then the reasoning explained above can be applied for almost all the cases: the only exceptions are  for $\ell = n-2$, that in this case would imply $u = w$, and for $\ell = n$, that it is not possible as that label is already used.

It then follows that it is not possible to cover the vertices in $S_0 \cup S_2$ with a labeling that induces an extended irregular dominating set, hence the statement follows.
\end{proof}

We recall now the definition of a Mobius ladder.
For every positive integer $n\geq2$, the \emph{Mobius ladder} on $2n$ vertices, denoted by $M_{2n}$, is the graph having $\{x_1,x_2,\dotsc, x_{2n}\}$ as vertex set, and such that $E(M_{2n})=\{\{x_i,x_{i+1}\} \mid 1\leq i\leq 2n\}\cup \{\{x_i, x_{i+n}\} \mid 1\leq i \leq n\}$, where the subscripts have to be considered modulo $2n$. Note that $M_4$ is nothing but the complete graph on $4$ vertices.
\begin{lemma} \label{lem:mobius_2_distance}
Let $n \geq 3$ and let $x$ be a vertex of $M_{2n}$. Assign a label $\ell \in [1,\lceil\frac{n}{2}\rceil]$ to $x$. Then, for every vertex $v$ covered by $x$ there exists a vertex $u$, dominated by $x$, such that $d(u,v) = 2$.
\end{lemma}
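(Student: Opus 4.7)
The plan is to make the distance structure of $M_{2n}$ completely explicit and then, for each vertex $v$ at distance $\ell$ from $x$, to exhibit a concrete partner $u$ at the same distance together with a length-$2$ path from $v$ to $u$. By vertex-transitivity of $M_{2n}$ I may assume $x=x_0$, identifying the vertex set with $\Z_{2n}$ so that $M_{2n}$ is the Cayley graph $\Cay(\Z_{2n},\{\pm 1,n\})$. Since a shortest walk uses at most one rung edge (two rungs cancel out, contributing $0$ to the index change), the distance from $0$ to $j\in\Z_{2n}$ equals $\min(|j|_c,\,1+|j-n|_c)$, where $|\cdot|_c$ denotes cyclic distance in $\Z_{2n}$.

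Using this formula I would first describe the set $B_\ell$ of vertices at distance $\ell$ from $0$. A short case analysis shows that for $\ell\in[2,\lfloor n/2\rfloor]$ one has the four distinct vertices
\[
B_\ell=\{\ell,\ -\ell,\ n-\ell+1,\ n+\ell-1\};
\]
for $\ell=1$ these collapse to the three vertices $1,-1,n$; and in the only remaining case permitted by the hypothesis $\ell\in[1,\lceil n/2\rceil]$, namely $n$ odd and $\ell=(n+1)/2$, the identifications $\ell=n-\ell+1$ and $-\ell=n+\ell-1$ leave just the two vertices $\ell$ and $-\ell$.

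Next I would pair up the vertices of $B_\ell$ by sending $\ell$ to $n+\ell-1$ and, symmetrically, $-\ell$ to $n-\ell+1$; in the degenerate case $\ell=1$ the vertex $n$ serves as a common partner for both $1$ and $-1$, while for $n$ odd with $\ell=(n+1)/2$ the two rules collapse into the single pair $\{\ell,-\ell\}$. For each such pair, the required length-$2$ path is $x_\ell\to x_{\ell+n}\to x_{\ell+n-1}$, which uses a rung edge followed by a cycle edge and has endpoints $x_\ell$ and $x_{n+\ell-1}$; a symmetric path handles $x_{-\ell}$ and $x_{n-\ell+1}$. To conclude that $d(v,u)=2$ and not $1$, I observe that the index difference $n-1$ is neither $\pm 1$ nor congruent to $\pm n\pmod{2n}$ for $n\geq 3$, so $v$ and $u$ are not adjacent in $M_{2n}$.

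The only real obstacle is careful bookkeeping in the two degenerate cases above; the argument is otherwise a direct calculation from the Cayley-graph distance formula. In particular, for $n$ odd and $\ell=(n+1)/2$ one verifies that $\ell+n-1\equiv -\ell\pmod{2n}$, so the very same $2$-path $x_\ell\to x_{\ell+n}\to x_{-\ell}$ witnesses $d(x_\ell,x_{-\ell})=2$ and completes the proof.
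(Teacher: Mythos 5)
Your proof is correct and follows essentially the same route as the paper's: both identify the vertices dominated by $x$ as $x_{1+\ell}$, $x_{2n-\ell+1}$, $x_{n+\ell}$, $x_{n-\ell+2}$ (your $\ell$, $-\ell$, $n+\ell-1$, $n-\ell+1$ in $0$-indexed form) and pair them off via a rung edge followed by a cycle edge to produce the required distance-$2$ partner. Your write-up is simply more explicit, supplying the Cayley-graph distance formula, the degenerate-case bookkeeping, and the non-adjacency check (so that the distance is exactly $2$) that the paper leaves implicit.
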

\begin{proof}
Clearly, by vertex-transitivity, we can assume that $x = x_1$.
The vertices dominated by $x_1$ are $x_{1+\ell}$, $x_{2n-\ell+1}$, $x_{n+\ell}$ and $x_{n-\ell+2}$ (note that for $n$ odd and $\ell =\frac{n+1}{2}$ we have $x_{1+\ell} = x_{n-\ell+2} = x_{\frac{n+3}{2}}$ and $x_{2n-\ell+1} = x_{n+\ell} = x_\frac{3n+1}{2}$). Since $\{x_i, x_{i+n}\}$ is an edge of $M_{2n}$ for every $i \in [1,n]$, we have $d(x_{1+\ell}, x_{n+\ell}) = d(x_{n-\ell+2}, x_{2n-\ell+1}) =2$.
\end{proof}

\begin{theorem}
    For every $n\geq 2$, the Mobius ladder $M_{2n}$ does not admit an extended irregular dominating set. 
\end{theorem}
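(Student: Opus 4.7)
My plan is to dispatch the small values of $n$ with earlier results and attack the main case $n\geq 5$ by a structural case analysis driven by Lemma~\ref{lem:mobius_2_distance}. For $n=2$, $M_4\simeq K_4$, so Corollary~\ref{cor:completo} applies. For $n=3$ one has $M_6\simeq K_{3,3}$, and for $n=4$ a direct computation shows $\mathrm{diam}(M_8)=2$; in both cases the proposition at the top of Section~\ref{sec:radius} (non-existence for vertex-transitive graphs of diameter at most two) finishes the argument.

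Assume $n\geq 5$, so a routine computation gives $\mathrm{diam}(M_{2n})=\lceil n/2\rceil \geq 3$. Suppose for contradiction that an extended irregular dominating set $S$ exists. By Corollary~\ref{cor:k=radius}, $|S|=\lceil n/2\rceil+1$, the set of labels used is exactly $\{0,1,\dots,\lceil n/2\rceil\}$, and every vertex is covered by precisely one element of $S$. By vertex-transitivity take $v_0=x_1$ as the $0$-labeled vertex; its neighbours are $a=x_2$, $b=x_{2n}$, $c=x_{n+1}$. A direct check yields the rigid local pattern
\[
N(a)\cap N(b)=\{v_0\},\quad N(a)\cap N(c)=\{v_0,x_{n+2}\},\quad N(b)\cap N(c)=\{v_0,x_n\},
\]
together with an explicit description of each distance-$2$ neighbourhood; for instance the distance-$2$ set of $a$ equals $\{x_4,x_{n+3},b,c\}$, and similarly for $b$ and $c$.

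From here the argument is a case analysis on how the labels distribute over $\{a,b,c\}$. The common-neighbour computation already forbids $y_1$ (the label-$1$ vertex) from covering all three of $a,b,c$, since that would force $y_1\in N(a)\cap N(b)\cap N(c)=\{v_0\}$; and it forces $y_1\in\{x_n,x_{n+2}\}$ whenever $y_1$ covers exactly two of them. For each remaining subcase (identifying which label covers each of $a,b,c$) I would invoke Lemma~\ref{lem:mobius_2_distance} on the cover set of each positive label: every covered vertex must have a distance-$2$ partner in its own cover set, chosen from its explicit distance-$2$ list. Matching these forced partners against the cover-set sizes $s_\ell$ supplied by Lemma~\ref{lem:radius_vtx_tran} and against the disjointness of cover sets, I would exhibit, in every subcase, a vertex forced to be covered by two distinct labels, which contradicts Theorem~\ref{prop:disjoint}. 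The main obstacle is bookkeeping across the subcases; the saving feature is that the common-neighbour intersections displayed above are very small, so most configurations collapse immediately, and the partner constraint from Lemma~\ref{lem:mobius_2_distance} is sharp enough to eliminate the rest.
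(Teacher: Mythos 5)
Your handling of the small cases is fine (and your dispatch of $n=4$ via $\mathrm{diam}(M_8)=2$ is a legitimate shortcut the paper does not need, since its main argument already starts at $n\geq 4$). The problem is the main case: for $n\geq 5$ you have written a plan, not a proof. The sentences ``I would invoke Lemma~\ref{lem:mobius_2_distance} \dots I would exhibit, in every subcase, a vertex forced to be covered by two distinct labels'' are precisely the content that needs to be supplied, and it is not at all automatic that the case analysis closes. After the common-neighbour computation you have only constrained the label-$1$ vertex; the subcases in which the labels $2,3,\dots,\lceil n/2\rceil$ are distributed over the covers of $a,b,c$ (and over the rest of the graph) are numerous, grow with $n$, and you give no uniform mechanism for killing them. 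A finite ad hoc check cannot settle a statement quantified over all $n\geq 5$, so as written the argument has a genuine gap at its core.

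For comparison, the paper pivots on the vertex labeled $2$ rather than the vertex labeled $0$: placing label $2$ on $x_1$ dominates the explicit four-set $X=\{x_3,x_n,x_{n+2},x_{2n-1}\}$, and the question becomes which label can cover $x_1$ itself. For $n$ odd, Lemma~\ref{lem:mobius_2_distance} alone shows that any label $\ell\in[1,\frac{n+1}{2}]$ placed at distance $\ell$ from $x_1$必 also hits a vertex of $X$, a double cover. For $n$ even the same lemma excludes all labels in $[1,\frac{n}{2}-1]$, and the last label $\frac{n}{2}$ is excluded by a structural observation: its dominated set induces a path on four vertices, whereas the component of $x_1$ in $M_{2n}\setminus X$ is a $K_{1,3}$, which contains no such path. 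This gives a uniform, $n$-independent contradiction with essentially no branching. If you want to salvage your approach, you would need to replace the open-ended bookkeeping with a similarly uniform obstruction; otherwise I would recommend reorganizing around the label-$2$ vertex as above.
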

\begin{proof}
Since $M_4$ is the complete graph of order $4$ and $M_6$ is the complete bipartite graph $K_{3,3}$, for $n=2,3$, the result follows from Corollary \ref{cor:completo}.

Suppose now $n\geq 4$. Assume by contradiction that there exists an extended irregular dominating set of $M_{2n}$. By vertex-transitivity, we can suppose, without loss of generality, that $x_1$ is the vertex that receives label $2$, hence dominating $X= \{x_3, x_{n},x_{n+2},x_{2n-1}\}$.

If $n$ is even, from Lemma \ref{lem:mobius_2_distance}, it can be seen that it is not possible to cover $x_1$ with a vertex having label in $[1,\frac{n}{2}-1]$: indeed, there would be a vertex between $x_3,\, x_{n},\,x_{n+2},\,x_{n-1}$ covered twice. Hence, $x_1$ has to be covered with a vertex having label $\frac{n}{2}$. It can be seen that the graph induced by the vertices dominated by a vertex having label $n/2$ is a path $P$ on $4$ vertices. However, $M_{2n} \setminus X$ is a disconnected graph, where the connected component containing $x_1$ is isomorphic to the complete bipartite graph $K_{1,3}$: since $P$ is not a subgraph of $K_{1,3}$, it follows that $x_1$ cannot be covered. Hence an extended irregular dominating set of $M_{2n}$ does not exist.

Suppose now $n$ odd.  By Lemma \ref{lem:mobius_2_distance} it can be seen that if a label in $\ell \in[1,\frac{n+1}{2}]$ is assigned to a vertex $w$, and $d(w,x_1) =\ell$, then there exists a vertex  $x\in X$ such that $d(w,x) = \ell$, that is then covered twice. Hence, there does not exist an extended irregular dominating set.
\end{proof}

\section{Results for cycles via strong starters}\label{sec:cycles}
In this section, we show that optimal extended irregular dominating labelings of cycles are equivalent to a combinatorial structure that has been thoroughly studied over the course of the years, that is strong starters in a cyclic group, see \cite{StarterHB}.

\begin{definition}\label{def:starter}
    Let $G$ be an additive abelian group of odd order $g$, where the neutral element is denoted by $0$. A \textit{starter} $L$ in $G$ is a set of unordered pairs $\{\{x_i,y_i\} \mid 1 \leq i \leq (g-1)/2\}$ such that:
\begin{itemize}
	\item[$\mathrm{(1)}$] $\{x_i,y_i  \mid 1 \leq i \leq (g-1)/2 \} = G \setminus \{0\}$;
	\item[$\mathrm{(2)}$] $\{ \pm (x_i-y_i) \mid 1 \leq i \leq (g-1)/2\} = G \setminus \{0\}$.
\end{itemize} 
A  starter $L=\{\{x_i,y_i\}\}$ in   $G$ is called a \textit{strong starter} if the additional property:
\begin{itemize}
\item[$\mathrm{(3)}$]
$x_i+y_i=x_j+y_j$ implies $i=j$ and for any $i$, $x_i+y_i\neq 0$
\end{itemize}
is satisfied. In other words a starter is called strong if $\{ (x_i+y_i) \mid 1 \leq i \leq (g-1)/2\}$ comprises of distinct elements in $G\setminus \{0\}$.

A  starter $L=\{\{x_i,y_i\}\}$ in   $G$ is said to be \textit{skew} if the following additional property holds:
\begin{itemize}
\item[$\mathrm{(4)}$] $x_i+y_i=\pm(x_j+y_j)$ implies $i=j$ and for any $i$, $x_i+y_i\neq 0$.
\end{itemize}
or equivalently if:
\begin{itemize}
\item[$\mathrm{(4)}$] 
$\{ \pm (x_i+y_i) \mid 1 \leq i \leq (g-1)/2\} = G \setminus \{0\}$.
\end{itemize}
\end{definition}

It is clear that a skew starter is also a strong starter. Both strong and skew starters have been studied in many groups, achieving various existence results. Here, we are interested in the case of the cyclic group $\Z_n$ for some positive integer $n$.
An hill-climbing algorithm to find strong starters in cyclic groups has been developed in \cite{DS1}. 
In \cite{Stinson24}, Stinson presented several results for strong starters in view of which he proposed the following conjecture.
\begin{conjecture}\label{conj:Stinson}
Let  $n\geq 5$ be an odd integer. There exists a strong starter in $\Z_n$ if and only if $n\neq5,9$.
\end{conjecture}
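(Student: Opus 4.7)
The plan is to split the conjecture into two halves: verifying the non-existence of a strong starter in $\Z_5$ and $\Z_9$, and constructing a strong starter in $\Z_n$ for every other odd $n \geq 7$. For non-existence in $\Z_5$, only three partitions of $\Z_5 \setminus \{0\}$ into two unordered pairs exist, and a direct check rules out all three against the conditions of Definition~\ref{def:starter}. For $\Z_9$, the $\frac{8!}{2^4 \cdot 4!} = 105$ partitions of $\Z_9 \setminus \{0\}$ into four unordered pairs admit a finite case analysis, organized around the cosets of the subgroup $\langle 3 \rangle \leq \Z_9$; a computer search confirms non-existence in seconds, and for a clean proof one would isolate the decisive structural obstruction, such as showing that the four sums cannot simultaneously be nonzero and all distinct modulo $9$.

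For the existence direction, I would proceed by cases on the arithmetic structure of $n$. For $n = p$ an odd prime $\geq 7$, I would pursue a Mullin--Nemeth style construction: fix a primitive root $g$ of $\Z_p$ and form pairs of the shape $\{g^{2i}, g^{2i+1}\}$, so that each pair has difference $g^{2i}(g-1)$ and sum $g^{2i}(g+1)$. Both quantities traverse a fixed coset of the squares, which reduces the starter and strongness conditions to arithmetic identities whose behaviour depends on whether $-1$ is a square modulo $p$; in the case $p\equiv 1\pmod 4$ where the basic pattern degenerates, variant pairings of the form $\{g^{2i}, -g^{2i+1}\}$ or $\{x, cx\}$ with $c$ a non-square should be substituted. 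For prime powers $n = p^k$ with $p \geq 5$, the same multiplicative construction transfers to $\Z_{p^k}^*$. For composite odd $n$ with $\gcd(n, 6) = 1$ and more than one prime factor, I would invoke a Chinese Remainder product construction, assembling local strong starters into a global one and verifying that strongness is preserved, typically via small perturbations to avoid accidental sum collisions between components.

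The principal obstacle is the case $9 \mid n$. No CRT-type product is available here, because $\Z_9$ itself admits no strong starter by the first half of the argument, so a direct construction on $\Z_n$ is required. For $n = 9m$ with $\gcd(m, 3) = 1$ and $m \geq 5$, a natural approach is to index the pairs by $(i, j) \in \Z_9 \times \Z_m$ under the CRT identification and hand-design a pairing rule that simultaneously controls residues modulo $9$ and modulo $m$; the difficulty is that the local obstruction in $\Z_9$ must be overcome by using the $\Z_m$-coordinate to break the tie, which typically forces explicit constructions tailored to each congruence class of $n$ modulo small primes. Given the conjectural status of the statement, a realistic intermediate target is to establish the conjecture for all $n$ with $9 \nmid n$, and then to address $9 \mid n$ via ad hoc infinite families combined with an inductive step on the $3$-adic valuation of $n$; the latter step is where I expect the greatest technical resistance.
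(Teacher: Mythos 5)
The statement you are addressing is not a theorem of the paper: it is Conjecture~\ref{conj:Stinson}, attributed to Stinson, and the paper offers no proof of it. The authors explicitly note that it is a special case of Horton's 1990 conjecture, ``which is still far from being solved,'' and the only evidence cited is a finite verification (existence of a strong starter in $\Z_n$ for every odd $n\neq 9$ with $7\leq n\leq 99$) together with the classical non-existence results for $\Z_5$ and $\Z_9$. So there is no proof in the paper to compare yours against, and your text, by its own admission, is a research programme rather than a proof.

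Concretely, the gaps in your plan coincide with exactly the cases that have kept this open for decades. The non-existence half for $n=5,9$ is a routine finite check and is fine. For existence, your Mullin--Nemeth-style pairing $\{g^{2i},g^{2i+1}\}$ only works for primes $p=2^kt+1$ with $t>1$ odd, and the ``variant pairings should be substituted'' step for the remaining primes is not a construction but a placeholder; note that the known skew-starter families (Theorem~\ref{thm:skew_starters}) already cover all primes $p>5$, so the genuinely hard cases are composite. For composite $n$, every product or CRT-type construction requires a strong (or skew) starter in each factor, and this fails for \emph{every} $n$ divisible by $3$ --- not just $9\mid n$ --- because the unique starter of $\Z_3$ is the patterned one with sum $0$ and $\Z_9$ has no strong starter at all; likewise $5\mid n$ with $125\nmid n$ escapes the known families. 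Your final paragraph concedes that these cases are where you ``expect the greatest technical resistance,'' which is precisely an acknowledgement that the argument is incomplete. A correct submission here would either prove the conjecture (a significant new result) or, more modestly, present it as what it is: an open problem on which only the finite range $n\leq 99$ and the families of Theorem~\ref{thm:skew_starters} are settled.
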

It is easy to see that a starter of $\Z_5$ does not exist. Also, the non existence of a strong starter
of $\Z_9$ is well-known. In the same paper Stinson proved the existence of a strong starter in $\Z_n$
for every odd $n\neq 9$, with $7\leq n\leq 99$.
We point out that the conjecture proposed in \cite{Stinson24} has a more general statement that Conjecture 
\ref{conj:Stinson}, but for the purpose of this paper it is sufficient to focus on this special case.
We also underline that this special case is contained in the following conjecture \cite{H} proposed by Horton in 1990, which is not restricted to cyclic groups and which is still far from being solved.

\begin{conjecture}
  Suppose that $G$ is an abelian group of odd order $g\geq3$. Then there is a strong starter in $G$ if and only if $G\neq  \Z_3, \Z_5, \Z_9$ or $\Z_3 \times \Z_3$.  
\end{conjecture}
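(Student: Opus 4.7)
The plan is to split the biconditional into two directions with very different flavors, acknowledging up front that the forward implication is genuinely open.

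For the "only if" direction, I would verify by direct enumeration that none of $\Z_3$, $\Z_5$, $\Z_9$, or $\Z_3 \times \Z_3$ admits a strong starter. For $\Z_3$ the unique candidate pair $\{1,2\}$ has sum $0$, failing the strong condition. For $\Z_5$, as already noted in the excerpt, no starter exists at all. The two groups of order $9$ require a finite search over partitions of eight nonzero elements into four pairs, with the difference and sum constraints enforced; this is a routine but necessary verification.

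For the "if" direction, I would invoke the structure theorem to write $G \cong \Z_{p_1^{a_1}} \times \cdots \times \Z_{p_k^{a_k}}$ with all $p_i$ odd, and split into two sub-cases. If $G$ is cyclic, the claim is Conjecture \ref{conj:Stinson}, so one would rely on its (currently partial) resolution, invoking the explicit constructions cited in \cite{Stinson24} for the sporadic small orders and a generic construction for large $n$. If $G$ is not cyclic, I would attempt a compounding construction: given a strong starter on a factor $H$ and a suitable auxiliary object on a complementary factor $K$ (another strong starter, a skew starter, or a well-behaved difference family), build pairs in $H \times K$ of the form $\{(x_i,t),(y_i,t')\}$ together with pairs supported on $\{0\}\times K$, choosing the $K$-components so as to avoid sum-collisions. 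The elementary abelian case $\Z_p^n$ would further exploit the field structure of $\mathbb{F}_{p^n}$: a natural attempt uses pairs of the form $\{u,\omega u\}$ where $u$ ranges over coset representatives of a subgroup of $\mathbb{F}_{p^n}^{*}$ and $\omega$ is a fixed non-square, tuned so that both the difference set and the sum set of the chosen pairs are injective onto the appropriate target.

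The main obstacle lies exactly in the compounding step: the strong condition $x_i+y_i \ne x_j+y_j$ is a global constraint on sums that is not preserved under naive products, so any construction on $H \times K$ must simultaneously avoid sum-collisions among the $H$-components and among the $K$-components. It is precisely this delicate balance which has left the conjecture open for decades, and a full proof would likely require a blend of explicit constructions for small $p$-groups (especially $\Z_p^n$ with $p=3$, to separate it from the exceptional $\Z_3\times\Z_3$) together with an asymptotic or existence-type argument for $|G|$ large. That combined input is the step I expect to be the true bottleneck, and where my proposal would most plausibly fall short of a complete argument.
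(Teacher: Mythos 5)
The statement you are asked to prove is Horton's conjecture from 1990 (reference \cite{H}); the paper explicitly labels it a conjecture and remarks that it ``is still far from being solved.'' There is no proof of it in the paper, so there is nothing to compare your argument against except the observation that a complete proof is not currently known to exist. Your proposal is candid about this, and to your credit you correctly isolate where the difficulty lies. The ``only if'' direction is indeed a finite verification: $\Z_3$ has a single candidate pair with sum $0$, $\Z_5$ admits no starter at all, and the two groups of order $9$ can be exhausted by hand or by computer. That half is fine.

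The genuine gap is the entire ``if'' direction. Your reduction of the cyclic case to Conjecture~\ref{conj:Stinson} replaces one open conjecture with another: Stinson's results cover odd $n$ with $7 \leq n \leq 99$ (excluding $9$) and the skew-starter families of Theorem~\ref{thm:skew_starters}, but there remain infinitely many cyclic groups of odd order for which no strong starter is known, so this step cannot be discharged. The non-cyclic case is worse: the compounding construction you sketch for $H \times K$ is only a template, and as you yourself note, the sum-injectivity condition $x_i + y_i \neq x_j + y_j$ is a global constraint that is not preserved under naive products; no choice of the $K$-components is exhibited, let alone verified to avoid sum-collisions, and the $\mathbb{F}_{p^n}$-based pairs $\{u, \omega u\}$ are proposed without checking either the difference condition or the sum condition. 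In short, your write-up is an honest research program, not a proof, and it should not be presented as establishing the statement; the correct conclusion is that both the paper and your proposal leave the conjecture open.
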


Now we summarize the main results for skew starters in a cyclic group, obtained in \cite{CGZ,L,LS,MN}.
\begin{theorem}\label{thm:skew_starters}
Let $n$ be a positive integer. Then, there exists a skew starter in $\Z_n$ in the following cases:
\begin{itemize}
	\item $n = 2^kt+1$ is a prime power, where $t>1$ is an odd integer (\textit{Mullin-Nemeth starters});
	\item  $n = 16t^2+1$ (\textit{Chong-Chang-Dinitz starters});
 \item $\gcd(n,6) =1$ and either $n \neq 0 \pmod{5}$ or $n = 0 \pmod{125}$.
\end{itemize}
\end{theorem}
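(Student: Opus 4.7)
The plan is to treat the three families of Theorem~\ref{thm:skew_starters} separately, since each is established in a different cited source, and in every case to verify the four defining properties of Definition~\ref{def:starter}. Properties (1) and (2) --- that the chosen pairs partition $\Z_n\setminus\{0\}$ and that the differences do as well --- are in each construction a straightforward consequence of the underlying algebraic setup (multiplication by a primitive root, or the isomorphism of a direct product), so the bulk of the effort would go into property (4), which subsumes (3) and is the genuine obstacle throughout.

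For the Mullin--Nemeth case ($n=2^kt+1$ a prime power, $t>1$ odd), I would identify $\Z_n$ with the additive group of $\mathbb{F}_n$ and fix a primitive root $\omega$. The construction pairs elements of $\mathbb{F}_n^\ast$ inside multiplicative cosets of an appropriate index-$2$ subgroup $C$: a typical pair has the form $\{c,\omega c\}$ with $c$ ranging over representatives of $C/\{\pm 1\}$, so that the differences are $(\omega-1)c$ and the sums are $(\omega+1)c$. The hypothesis that $t$ is odd (together with $t>1$, which rules out $n=3$) is what guarantees that $-1\in C$ and that $(\omega+1)c$ does not coincide with $\pm(\omega+1)c'$ for $c\neq c'$, yielding (4). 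For the Chong--Chang--Dinitz case, the hypothesis $n=16t^2+1$ gives $n\equiv 1\pmod{16}$, so one may work with the cyclotomic classes of order $16$ in $\mathbb{F}_n^\ast$; the pairs are obtained by selecting a representative from each class according to a prescribed pattern, and the verification of (4) reduces to a finite set of identities among cyclotomic numbers of order $16$, tabulated in the source.

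For the third family, the approach is a direct-product construction: if $a,b$ are coprime odd integers and both $\Z_a,\Z_b$ admit skew starters, then componentwise pairing through $\Z_{ab}\cong\Z_a\times\Z_b$ yields a skew starter in $\Z_{ab}$. Combining this multiplicativity with the Mullin--Nemeth family (which handles prime powers of odd primes with an odd non-trivial $2'$-part) and with an explicit skew starter in $\Z_{125}$ (or, more generally, in $\Z_{5^k}$ for $k\ge 3$) produces the stated range: the condition $\gcd(n,6)=1$ reflects that $\Z_2$ and $\Z_3$ cannot contribute factors, while the dichotomy between $5\mid n$ and $125\mid n$ reflects that no skew starter exists in $\Z_5$ or $\Z_{25}$ but one exists in $\Z_{125}$. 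The uniform obstacle across all three parts is property (4): differences behave well under any multiplicative pairing (they are again a multiplicative orbit), but sums do not interact well with multiplication, so each of the three constructions must invoke a specific additive identity --- cyclotomic in the first two cases, product-based in the third --- to exclude the coincidences that would break skewness.
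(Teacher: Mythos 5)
This theorem is not proved in the paper at all: it is stated explicitly as a summary of results taken from the cited literature (Mullin--Nemeth, Lins--Schellenberg, Liaw, and Chen--Ge--Zhu), so there is no in-paper argument to compare against. Judged on its own terms, your proposal is a plausible roadmap of how those sources proceed, but it is a plan rather than a proof: every substantive verification (the exact Mullin--Nemeth pairing, the cyclotomic identities of order $16$, the explicit skew starter in $\Z_{125}$, the recursive machinery of Chen--Ge--Zhu) is deferred to the references, which is essentially what the paper already does by citation.

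Beyond that, two steps as you describe them would not go through. First, the product construction for the third family is not ``componentwise pairing through $\Z_{ab}\cong\Z_a\times\Z_b$'': the elements of the form $(x,0)$ and $(0,y)$ are not obtained by pairing nonzero entries in both coordinates, so a purely componentwise scheme covers only the $(a-1)(b-1)$ elements with both coordinates nonzero and leaves $(a-1)+(b-1)$ elements of $\Z_{ab}\setminus\{0\}$ undominated; moreover, each product of a pair from $\Z_a$ with a pair from $\Z_b$ yields a $4$-set that must be split into two pairs, and it is precisely this splitting that threatens properties $(2)$ and $(4)$ --- the actual multiplication theorems for skew starters require a more delicate construction (and further direct constructions for prime powers $q=2^k+1$, i.e.\ $t=1$, which Mullin--Nemeth excludes, so the third bullet does not reduce to the first plus $\Z_{125}$ alone). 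Second, your description of the Mullin--Nemeth construction via an index-$2$ subgroup $C$ with $-1\in C$ is not the right setup: the construction works with the cosets of the subgroup of $2^k$-th powers, which has odd order $t$ and hence cannot contain $-1$; the hypothesis $t>1$ odd enters through that coset structure, not through an index-$2$ subgroup. As written, the proposal therefore has genuine gaps in both the recursive and the direct parts.
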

As remarked by Stinson in \cite{Stinson}, the parameters for the Mullin-Nemeth and Chong-Chang-Dinitz starters allow to construct skew starters in the notable class of cyclic groups whose order is a prime number larger than $5$.

We now establish the following equivalence, where by $C_n$ we denote the cycle of length $n$.
\begin{proposition}
Let $n$ be an odd integer. Then, an optimal extended irregular dominating set over $C_n$ is equivalent to a strong starter in $\Z_n$.
\end{proposition}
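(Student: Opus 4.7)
The plan is to set up a bijective correspondence between optimal extended irregular dominating sets $S$ on $C_n$ and strong starters $L$ in $\Z_n$, using that $2$ is invertible in $\Z_n$ when $n$ is odd. By Corollary \ref{cor:k=radius}, any optimal extended irregular dominating set on the vertex-transitive graph $C_n$ has exactly $k = \mathrm{diam}(C_n)+1 = (n+1)/2$ vertices, labeled with all of $[0,(n-1)/2]$, and by Theorem \ref{prop:disjoint} each vertex is covered exactly once. Identifying the vertex set of $C_n$ with $\Z_n$, the two vertices at distance $\ell>0$ from $v$ are $v\pm\ell$.

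For the forward direction, suppose $S$ is an optimal extended irregular dominating set of $C_n$. By vertex-transitivity I may translate so that the vertex labeled $0$ is placed at $0\in\Z_n$. For each $i\in[1,(n-1)/2]$ let $v_i$ be the unique vertex of $S$ labeled $i$, and set $\{x_i,y_i\}=\{v_i+i,\,v_i-i\}$. I will check the three starter axioms: property (1) follows from Theorem \ref{prop:disjoint} (the sets $\{x_i,y_i\}$ partition the vertices covered by nonzero labels, which is exactly $\Z_n\setminus\{0\}$); property (2) holds because $x_i-y_i=2i$ and the map $i\mapsto 2i$ is a bijection on $\Z_n$, so $\{\pm 2i:i\in[1,(n-1)/2]\}=\Z_n\setminus\{0\}$; the strong property (3) follows because $x_i+y_i=2v_i$, the $v_i$ are pairwise distinct (distinct labels) and nonzero (the zero label is at $0$).

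For the reverse direction, let $L=\{\{x_i,y_i\}:i\in[1,(n-1)/2]\}$ be a strong starter. Since $\gcd(2,n)=1$, set
\[
v_i = 2^{-1}(x_i+y_i)\in\Z_n, \qquad r_i = 2^{-1}(x_i-y_i)\in\Z_n\setminus\{0\},
\]
and let $\ell_i\in[1,(n-1)/2]$ be the $C_n$-distance from $0$ to $r_i$, i.e.\ $\ell_i=\min\{r_i,\,n-r_i\}$ using the representative in $[1,n-1]$. Place the vertex labeled $0$ at $0$, and for each $i$ place a vertex labeled $\ell_i$ at $v_i$. The strong starter property gives $v_i\neq 0$ and $v_i\neq v_j$ for $i\neq j$, so the $(n+1)/2$ vertices are distinct. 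Starter property (2), after dividing by $2$, says $\{\pm r_i\}=\Z_n\setminus\{0\}$, so the labels $\ell_i$ are a permutation of $[1,(n-1)/2]$. A quick case split on whether $r_i\in[1,(n-1)/2]$ or $r_i\in[(n+1)/2,n-1]$ shows that $\{v_i+\ell_i,v_i-\ell_i\}=\{x_i,y_i\}$ in either case, and therefore starter property (1) guarantees that every element of $\Z_n\setminus\{0\}$ is covered, each exactly once.

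The main obstacle, which is really a bookkeeping issue rather than a deep one, is the last case split: making sure that replacing $(x_i-y_i)/2$ by its distance $\ell_i$ (i.e.\ possibly negating mod $n$) still produces the pair $\{x_i,y_i\}$ as the two vertices at distance $\ell_i$ from $v_i$, so that dominating set membership lines up cleanly with the partition axiom of the starter. Once that is verified, the two constructions are mutually inverse by inspection, yielding the claimed equivalence.
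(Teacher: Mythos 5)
Your proposal is correct and follows essentially the same route as the paper: both directions rest on the correspondence sending a covering pair $\{x_i,y_i\}$ to the dominating vertex $2^{-1}(x_i+y_i)$ with label given by the half-difference (up to sign), with properties (1), (2), (3) matching exact coverage, distinctness of labels, and distinctness of dominating vertices respectively. The only difference is cosmetic: you use the inverse of $2$ uniformly where the paper performs a parity case split on $x_i-y_i$ versus $n-(x_i-y_i)$, which is the same bookkeeping in different clothing.
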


\begin{proof}
Let $S$ be an extended irregular dominating set over $C_n = (v_0,v_1,\dotsc, v_{n-1})$, with labeling function $\lambda: S \rightarrow [0,\frac{n-1}{2}]$. Recall that, by Corollary \ref{cor:k=radius}, $|S|=\frac{n+1}{2}$, that is, $\lambda$ is a bijection. Observe that, since $n$ is odd, $2$ admits a multiplicative inverse in $\Z_n$, that we denote by $2^{-1}$ for the sake of brevity.
Assume without loss of generality that $v_0\in S$ and $\lambda(v_0) =0$, and construct  the following set $L \subset \Z_n \times \Z_n$:
\[
L = \{ \{i,j\} \mid \text{ $v_i,v_j$ are dominated by $v \in S\setminus \{v_0\}$ } \}.
\]
We prove that $L$ is a strong starter in $\Z_n$.  Since every vertex of $C_n$ is dominated exactly once, it is immediate to verify that property $\mathrm{(1)}$ of Definition \ref{def:starter} holds. 
Moreover, let $\{i,j\} \in L$, with $i >j$, and let $v_k$ be the vertex that dominates $v_i$ and $v_j$, receiving label $\ell$. It can be immediately seen that precisely one between $i-j$ and $n+j-i$ is even. 
In the first case, we necessarily have $v_k = v_{(i+j)/2}$ and $\ell = \frac{i-j}{2}$, while in the second case $v_k = v_{(n+i+j)/2}$ and $\ell = \frac{n+j-i}{2}$.
Suppose now that property $\mathrm{(2)}$ of Definition \ref{def:starter} does not hold, and let $\{x,y\}$ and $\{r,s\}$ be two pairs such that $\{\pm (x-y)\} = \{\pm (r-s)\}$, with $x>y$ and $r>s$. Note that it is not restrictive to assume that $x-y = r-s$ is an even number (otherwise, consider $y+n$ and $s+n$). Then, the labels assigned to the vertices $v_{\frac{x+y}{2}}$ and $v_{\frac{r+s}{2}}$ are not distinct, hence $\lambda$ is not a bijection and $S$ is not an extended irregular dominating set, that is a contradiction. Thus property $\mathrm{(2)}$ of Definition \ref{def:starter} holds. Hence we have proved that $L$ is a starter. Finally, it is easy to see that if property $\mathrm{(3)}$ of Definition \ref{def:starter} does not hold,  there exist $\{x,y\}$ and $\{r,s\}$ in $L$ such that $x+y = r+s$, then the vertices dominating $v_x,v_y$ and $v_r,v_s$ must coincide. It would follow that $S$ is not an extended irregular dominating set, hence also property $\mathrm{(3)}$ of Definition \ref{def:starter} holds, and $L$ is a strong starter in $\Z_n$.

Let now $L = \{\{x_i,y_i\} \mid 1 \leq i \leq (n-1)/2\} $ be a strong starter in $\Z_n$ with $x_i>y_i$ for each $1 \leq i \leq (n-1)/2$. Let $V=\{v_0,v_1,\ldots,v_{n-1}\}$ be the vertex set of $C_{n}$ and $E(C_n)=\{\{v_i,v_{i+1}\} \mid i\in[0,n-1]\}$ where the indexes are understood modulo $n$.
Consider the following set  $\left\{v_{\frac{x_i+y_i}{2}} \mid \{x_i,y_i\} \in S\right\}\subseteq V$, where if $x_i+y_i<n$ is odd, by $v_{\frac{x_i+y_i}{2}}$ we mean $v_{\frac{n+x_i+y_i}{2}}$. 
Take now the labeling so defined:
\[
\lambda\left(v_{\frac{x_i+y_i}{2}}\right) = \left\{ \begin{aligned}
&\frac{x_i-y_i}{2} \quad \text{ if $x_i - y_i\pmod{n}$ is even,} \\
&\frac{n-(x_i-y_i)}{2} \quad \text{otherwise.} \\ 
\end{aligned}\right.
\]
One can easily check that $\lambda$ is an optimal extended irregular dominating labeling of $C_n$.
\end{proof}

Since every skew starter is a strong starter, the existence of an optimal extended irregular dominating set is granted for cycles length $n$, for every $n$ belonging to one of the cases of Theorem \ref{thm:skew_starters} and for every $n$ for which Conjecture \ref{conj:Stinson} holds. In particular, we have.

\begin{corollary}\label{cor:Cnodd}
The cycle $C_n$ admits an optimal extended irregular dominating set for every odd integer $n\neq 9$, such that $7\leq n \leq 99$ and for every prime $n>5$.
\end{corollary}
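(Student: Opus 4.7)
The proof is essentially an application of the equivalence just established in the preceding proposition, combined with known existence results for (strong) starters in cyclic groups of odd order. So the plan is to split into the two claims made in the corollary and invoke, in each case, the appropriate literature result cited above.

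First, for the range $7 \leq n \leq 99$ with $n \neq 9$, I would appeal directly to Stinson's computational result stated in the paragraph following Conjecture \ref{conj:Stinson}, which guarantees a strong starter in $\Z_n$ for every such odd $n$. Since the preceding proposition shows that an optimal extended irregular dominating set of $C_n$ exists if and only if a strong starter of $\Z_n$ exists, the conclusion for this range follows immediately.

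Second, for every prime $n > 5$, I would combine the Mullin--Nemeth and Chong--Chang--Dinitz families from Theorem \ref{thm:skew_starters}. If $n$ is an odd prime with $n - 1 = 2^k t$ and $t > 1$ odd, the Mullin--Nemeth construction applies directly (with $n$ itself being a prime power). The remaining primes are precisely the Fermat primes $n = 2^{2^m} + 1$ with $m \geq 1$; for $m \geq 2$ one verifies that $n = 16(2^{2^{m-1}-2})^2 + 1$, placing $n$ in the Chong--Chang--Dinitz family, while $m = 1$ is ruled out by $n > 5$. In all cases, $\Z_n$ admits a skew starter, hence a strong starter, and applying the proposition once more yields an optimal extended irregular dominating set of $C_n$.

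Since both bullet points of the corollary are thereby reduced to results stated earlier in the paper, there is essentially no obstacle; the only mildly nontrivial step is making sure Fermat primes are handled (they are not directly of Mullin--Nemeth type because $t = 1$), but this is a short computation that fits within the remark following Theorem \ref{thm:skew_starters}. The proof in the paper can therefore be written as a one-line deduction from the proposition together with the cited existence theorems.
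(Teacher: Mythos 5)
Your proposal is correct and follows essentially the same route as the paper: the corollary is deduced immediately from the equivalence with strong starters in $\Z_n$, citing Stinson's verification for odd $7\leq n\leq 99$, $n\neq 9$, and the Mullin--Nemeth and Chong--Chang--Dinitz skew starter families (as Stinson remarks, these cover all primes greater than $5$). Your explicit treatment of the Fermat primes via the $16t^2+1$ family is a nice check that the paper leaves implicit, but it does not change the argument.
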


\begin{example}
    Starting from the following strong starter of $\mathbb{Z}_{17}$
    $$\{\{9,10\},\{3,5\},\{13,16\},\{11,15\},\{1,6\},\{2,8\},\{7,14\},\{4,12\}\}$$
    one can construct the optimal extended irregular dominating labeling of $C_{17}$ below.
    \begin{center}
        \includegraphics[width= \textwidth]{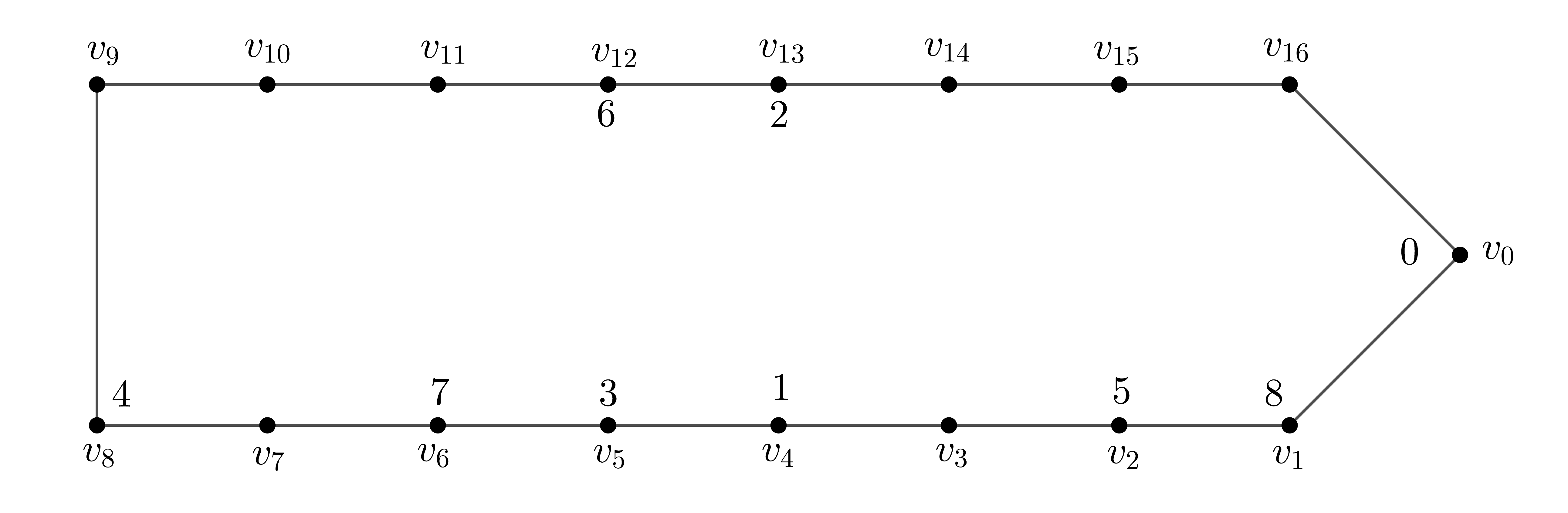}
    \end{center}
\end{example}

In what follows, we show that there exists an optimal extended irregular dominating set for many cycles, having singly even length.

\begin{proposition} \label{prop:doubling}
Let $n$ be an odd integer. If  $C_n$ admits an optimal extended irregular dominating set, then $C_{2n}$ admits an optimal extended irregular dominating set too.
\end{proposition}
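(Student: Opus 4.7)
The plan is to exploit the equivalence, just established earlier in the paper, between $C_n$ extended irregular dominating sets and strong starters in $\Z_n$ for $n$ odd, and then construct an extended irregular dominating set of $C_{2n}$ directly from a strong starter. Let $L = \{\{x_i, y_i\} : 1 \leq i \leq (n-1)/2\}$ be a strong starter in $\Z_n$ supplied by the hypothesis, with differences $d_i = y_i - x_i$ and sums $s_i = x_i + y_i$, the $\{\pm d_i\}$ exhausting $\Z_n \setminus \{0\}$ and the $s_i$'s being distinct and nonzero modulo $n$. In $\Z_{2n}$ I would then form the $n - 1$ pairs $P_i = \{2 x_i, 2 y_i\}$ and $\hat P_i = \{2 x_i + 1, 2 y_i + 1\}$, which together partition $\Z_{2n} \setminus \{0, 1\}$; each such pair has difference $2 d_i$ in $\Z_{2n}$, so it can be realized as the coverage of a vertex labeled either $d_i$, at the midpoint $w_{s_i}$ for $P_i$ or $w_{s_i + 1}$ for $\hat P_i$, or labeled $n - d_i$, at the ``antipodal'' midpoints $w_{s_i + n}$ or $w_{s_i + 1 + n}$ respectively.

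The construction is completed by assigning, for each $i$, one of the two labels $\{d_i, n - d_i\}$ to $P_i$ and the other to $\hat P_i$ (so that labels $1, \ldots, n-1$ are each used exactly once), and placing labels $0$ and $n$ to cover the two leftover singletons $\{0, 1\}$: either label $0$ at $w_0$ and label $n$ at $w_{n+1}$ (so label $n$ covers $w_{2n+1} = w_1$), or label $0$ at $w_1$ and label $n$ at $w_n$ (so label $n$ covers $w_0$). A direct verification using the starter properties shows that the resulting covered vertices partition $\Z_{2n}$, hence every vertex of $C_{2n}$ is dominated exactly once regardless of the remaining choices.

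The main obstacle will be proving that the label assignments and the singleton-placement can always be made so that the $n+1$ label-bearing vertices of $C_{2n}$ are pairwise distinct. Within a single index $i$, the candidate midpoints of $P_i$ and $\hat P_i$ differ by $1$ modulo $n$, so the two labels in $\{d_i, n - d_i\}$ must be distributed so that the chosen midpoints disagree; between different indices $i \neq j$, coincidences come from congruences such as $s_i \equiv s_j + 1 \pmod n$. The distinctness of the $s_i$'s guaranteed by the ``strong'' property rules out some such coincidences directly, and the remaining conflicts can be broken by toggling which element of $\{d_i, n - d_i\}$ is assigned to $P_i$ versus $\hat P_i$, which swaps the chosen midpoint between its ``short'' ($w_{s_i}$, $w_{s_i+1}$) and ``long'' ($w_{s_i+n}$, $w_{s_i+1+n}$) versions. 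The hard part will be showing that this toggling flexibility, together with the two choices for the singleton placement, always suffices --- I anticipate a careful combinatorial argument exploiting the explicit structure of the strong starter (in particular, the distinctness of the sums $s_i$) rather than a one-line slick proof.
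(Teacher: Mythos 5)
Your reduction to strong starters is a legitimately different route from the paper (which never returns to starters here), and your setup is sound: the pairs $P_i=\{2x_i,2y_i\}$ and $\hat P_i=\{2x_i+1,2y_i+1\}$ do partition $\Z_{2n}\setminus\{0,1\}$, each can be covered from exactly two antipodal midpoints with labels $d_i$ and $n-d_i$, and the label multiset $\{d_i,n-d_i\}_i\cup\{0,n\}$ is exactly $[0,n]$. But the proposal stops precisely where the proposition's actual content lies: you never prove that the $(n-1)/2$ per-index toggles together with the two singleton placements can always be made so that the $n+1$ labeled vertices are pairwise distinct. You explicitly flag this as ``the hard part'' and ``anticipate a careful combinatorial argument'' without supplying one. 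Conflicts between indices $i\neq j$ arise whenever $s_i\equiv s_j\pm 1\pmod n$, and these can chain; resolving them by local toggling is a nontrivial constraint-satisfaction claim (essentially a $2$-colorability statement about a conflict graph) that is neither obvious nor argued. As written, this is a plan with a genuine gap, not a proof.

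For comparison, the paper avoids the distinctness problem entirely by a global symmetry argument rather than local repairs. It doubles the given labeling of $C_n$ into the even-indexed vertices of $C_{2n}$ (labels $2\lambda_1$, covering the even class $A$); observes that replacing a labeled vertex $v$ with label $x$ by its antipode $v+n$ with label $n-x$ preserves the covered set; uses this involution to push all labeled vertices into a single component $\C_1$ of $C_{2n}$ minus a cut $\{c,c+n\}$ through an unlabeled vertex; and then reflects across the cut with complementary labels $\ell\mapsto n-\ell$ to cover the odd class $B$. Distinctness of the $n+1$ labeled vertices is then automatic, since the two halves of the construction live in the two disjoint components. If you want to complete your starter-based argument, the cleanest fix is to import exactly this idea: instead of toggling index by index, choose the midpoint for each pair to be the one lying in a fixed component $\C_1$, and obtain the complementary half by reflection into $\C_2$; otherwise you must actually prove your conflict-resolution claim.
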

\begin{proof}
Since $n$ is an odd integer, for every $x \in \Z_{n}^*$ precisely one element between $x$ and $n-x$ is odd. Let $C_{2n} = (v_0,v_1,\dotsc, v_{2n-1})$, and for any vertex $v_i$ by $-v_i$ we mean $v_{i+n}$, where indexes are read modulo $2n$. 
Let now $\lambda$ and $\lambda' $ be two labelings on $C_{2n}$ such that $\lambda(v) = x$ and $\lambda'(-v) =n -x$ for some vertex $v$, where $x,n-x \in \Z_{n}^*$. Then $v$ through $\lambda$ and $-v$ through $\lambda'$ dominate the same set of vertices. Set $A = \{v_{2i}: i \in [0,n-1]\}$ and $B = \{v_{2i+1}: i \in [0,n-1]\}$. 

Let $V(C_n)= (x_0,\dotsc, x_{n-1})$, and set $\lambda_1: X\subset V(C_n) \rightarrow [0,\frac{n-1}{2}]$ be an optimal extended irregular dominating labeling of $C_n$ which exists by hypothesis. Let $\psi$ be the natural bijection $\psi: V(C_n) \rightarrow A \subset V(C_{2n})$, where $x_i \mapsto \psi(x_i) = v_{2i}$. Construct the labeling $\lambda_2 : \psi(X) \subset V(C_{2n}) \rightarrow [0,n]$, where $\lambda_2(v_{2i}) = \lambda_2\psi(x_i) = 2 \lambda_1(x_i) $. Since the vertices in $\psi(X)$ are contained in $A$ and have even labels, from the fact that $\lambda_1$  induces an optimal extended irregular dominating set  it follows that the vertices of $\psi(X)$ dominate $A$.

Let now $v_{2i} \in A$ be any vertex that is not labeled by $\lambda_2$, that is $v_{2i} \in A \setminus \psi(X)$. Let $Y = \{ v_{2i},-v_{2i} \}$ be a cut of $C_{2n}$, and let $\C_1$ and $\C_2$ be the vertex set of the two connected components of $C_{2n} \setminus Y$. For the sake of brevity, denote by $Z$ the set $(\psi(X) \cap \C_1) \cup (-(\psi(X) \cap \C_2))$, and let $\lambda_3: Z \subset \C_1 \rightarrow [0,n]$ be the following labeling:
\[
\lambda_3(v)=\left\{
\begin{aligned}
&\lambda_2(v)& \text{ if $v \in \psi(X) \cap \C_1$,} \\
&-\lambda_2(v) \pmod{n}& \text{ if $\lambda_2(v) \neq 0$ and  $v \in-(\psi(X) \cap \C_2) $,} \\
&n &\text{if  $\lambda_2(v) = 0$ (and $v \in-(\psi(X) \cap \C_2) $).} \\
\end{aligned}\right.
 \]
As previously remarked, it follows that $\lambda_3$ induces a set of labeled vertices dominating $A$. Let now $\eta: \C_1 \rightarrow \C_2$, $v \mapsto \eta(v) = w$, where $w$ is the vertex of $\C_2$ such that the distance between $w$ and $v_{2i}$ is equal to the distance between $v$ and $v_{2i}$.
We conclude the proof by constructing the labeling $\lambda: Z \cup \eta(Z) \rightarrow [0,n]$, where:
\[
\lambda(v)=\left\{
\begin{aligned}
&\lambda_3(v) &\text{ if $v\in Z$,} \\
&-\lambda_3(v)& \text{if $\lambda_3(v) \neq n$ and $v \in -Z$,}\\
&0& \text{if $\lambda_3(v) = n$ (and $v \in -Z$).}
\end{aligned}\right.
\]
Since $Z \subset \C_1$, it follows that $\lambda$ is well-defined, and as the dominated set induced by $\lambda_3$ is $A$, $\lambda$ is an optimal extended irregular dominating labeling inducing an optimal extended irregular dominating set on $C_{2n}$. 
\end{proof}

It is easy to see that the converse of previous result does not hold. For example, even if $C_3$ and $C_5$ do not admit an extended irregular dominating set, $C_6$ and $C_{10}$ have such a set as shown below:
\[
\begin{aligned}
 &C_6:&(3,1,\square,2,0,\square), \qquad
&C_{10}: &(1,4,2,\square,\square,3,0,\square,5,\square), \\
\end{aligned}
\] 
where $\square$ denotes a vertex with no label.

\begin{corollary}
Let $p$ be a prime with $p\geq 7$. Then, there exists an optimal extended irregular dominating set of $C_{2p}$.
\end{corollary}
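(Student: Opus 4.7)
The plan is to observe that this corollary is an almost immediate combination of the two preceding results, namely Corollary \ref{cor:Cnodd} and Proposition \ref{prop:doubling}, applied in sequence.

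First I would note that any prime $p \geq 7$ is in particular an odd integer greater than $5$. By Corollary \ref{cor:Cnodd}, which in turn follows from the existence of Mullin--Nemeth skew starters in $\Z_p$ for prime $p > 5$ (cf.\ Theorem \ref{thm:skew_starters}) together with the equivalence between optimal extended irregular dominating sets of $C_p$ and strong starters in $\Z_p$, we conclude that $C_p$ admits an optimal extended irregular dominating set.

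Next I would invoke Proposition \ref{prop:doubling}, which asserts that if $n$ is odd and $C_n$ admits an optimal extended irregular dominating set, then so does $C_{2n}$. Applying this with $n = p$ (which is odd since $p \geq 7$ is prime), we obtain an optimal extended irregular dominating set of $C_{2p}$, as required.

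Since both ingredients have already been established in the excerpt, there is no real obstacle to overcome; the corollary is essentially a bookkeeping statement. If anything, the only point that deserves to be flagged explicitly is the hypothesis $p \geq 7$, which is exactly what is needed to rule out the excluded small cases ($p = 5$, for which no strong starter in $\Z_5$ exists, and the non-prime $n = 9$).
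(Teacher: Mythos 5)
Your proof is correct and follows exactly the route the paper intends: combine Corollary \ref{cor:Cnodd} (existence for $C_p$, $p>5$ prime) with the doubling construction of Proposition \ref{prop:doubling}. Nothing further is needed.
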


\begin{corollary}
Let $n$ be an odd integer with $n\neq 9$ and $7\leq n \leq 99$. Then, there exists an optimal extended irregular dominating set of $C_{2n}$.
\end{corollary}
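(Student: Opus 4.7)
The plan is to obtain this corollary as an immediate composition of two results already available in the excerpt, namely Corollary \ref{cor:Cnodd} and Proposition \ref{prop:doubling}. First I would observe that the hypothesis on $n$ (odd, $7\leq n\leq 99$, $n\neq 9$) is exactly the list of odd cases for which Corollary \ref{cor:Cnodd} guarantees that $C_n$ admits an optimal extended irregular dominating set. Thus for each such $n$, we are handed, at no cost, an optimal extended irregular dominating labeling of $C_n$.

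Next I would invoke Proposition \ref{prop:doubling}, which takes as input an odd integer $n$ together with an optimal extended irregular dominating set of $C_n$ and produces an optimal extended irregular dominating set of $C_{2n}$. Feeding in the set supplied in the previous paragraph for each admissible $n$ yields the desired optimal extended irregular dominating set of $C_{2n}$, completing the proof.

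There is essentially no obstacle here, since the substantive work has already been carried out: the number-theoretic input (strong starters, skew starters, and Stinson's computational range) is packaged in Corollary \ref{cor:Cnodd}, while the doubling construction that promotes a labeling on $C_n$ to one on $C_{2n}$ is packaged in Proposition \ref{prop:doubling}. The only point worth underlining, to keep the argument clean, is that the hypotheses of Proposition \ref{prop:doubling} (namely $n$ odd and $C_n$ admitting an optimal extended irregular dominating set) match the conclusion of Corollary \ref{cor:Cnodd} verbatim, so the two results can be chained without any intermediate verification. Consequently the proof can be written in just two or three lines.
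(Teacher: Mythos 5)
Your proposal is correct and is precisely the intended argument: the paper states this corollary without proof exactly because it follows immediately by chaining Corollary \ref{cor:Cnodd} (which gives the optimal extended irregular dominating set of $C_n$ for odd $n\neq 9$ with $7\leq n\leq 99$) with the doubling construction of Proposition \ref{prop:doubling}. Nothing further is needed.
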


\begin{example}\label{ex:C14}
We show the construction provided in Proposition \ref{prop:doubling} for $n=7$. Consider the following optimal extended irregular dominating labeling of $C_7$:
\begin{center}
\includegraphics[width = 0.5 \textwidth ]{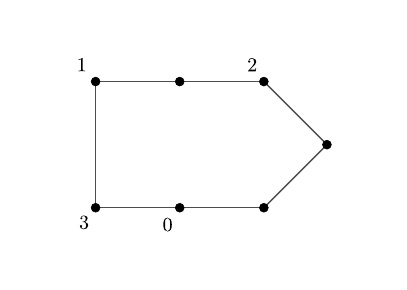}
\end{center}
We now consider the cycle $C_{14} = (v_0,v_1,\dotsc,v_{13})$, and a possible choice for the labeling obtained by doubling the labels of $C_7$: 
\begin{center}
\includegraphics[width = \textwidth]{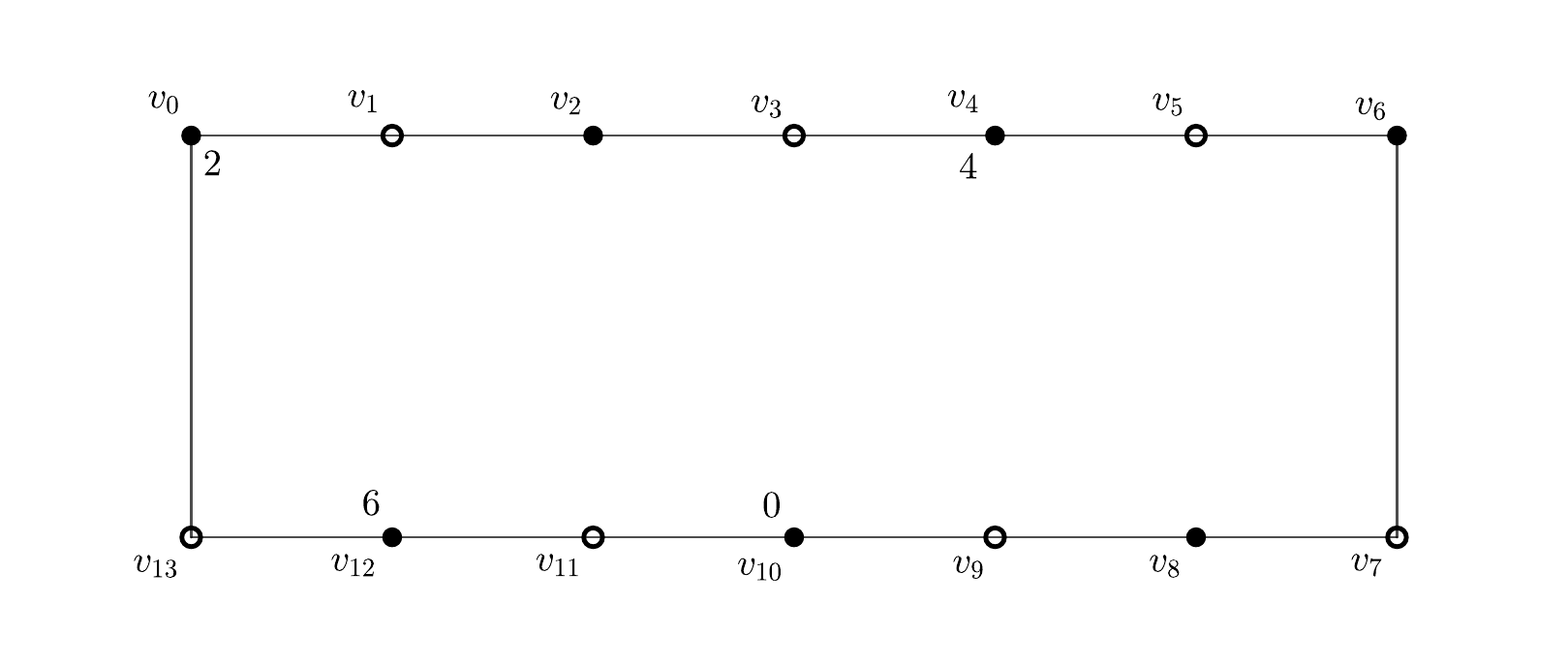}
\end{center}
It can be seen that the labeled vertices cover all the vertices of the form $\{v_{2i}: i \in[0,6]\}$. Consider now the cut $Y=\{v_1,v_8\}$, and the vertex sets  $\C_1 = \{v_i: i \in [2,7]\}$ and $\C_{2} = \{v_i: i \in \{0\}\cup[9,13]\}$ of the connected components of $C_{14}\setminus Y$. We then have $Z = \{v_4\} \cup \{v_3,v_5,v_7\} \subset \C_1$, and we construct the labeling $\lambda_3: Z \rightarrow [0,7]$:
\[
\begin{aligned}
&\lambda_3(v_4) = 4, & \lambda_3(v_3) = 7, \\
& \lambda_3(v_5)  =  1, &\lambda_3(v_7) =5, \\  
\end{aligned}
\]
that we show below:
\begin{center}
 \includegraphics[width =0.9 \textwidth]{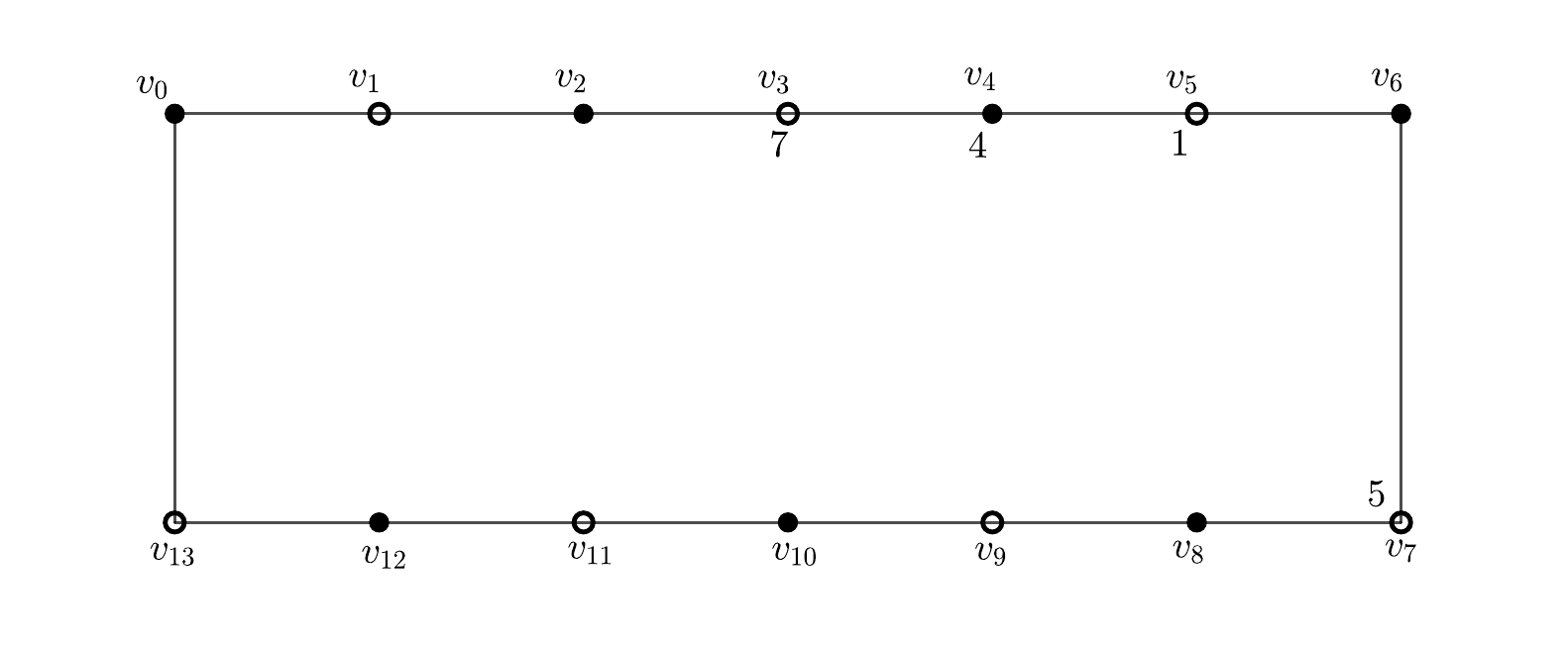}
\end{center}
We then conclude the example by showing the labeling $\lambda$. Since $\lambda$ can also be obtained by reflecting the labeled vertices along the line joining the vertices of the cut $Y$, we have added the dashed line between $v_1$ and $v_8$:
\begin{center}
\includegraphics[width = 0.9\textwidth]{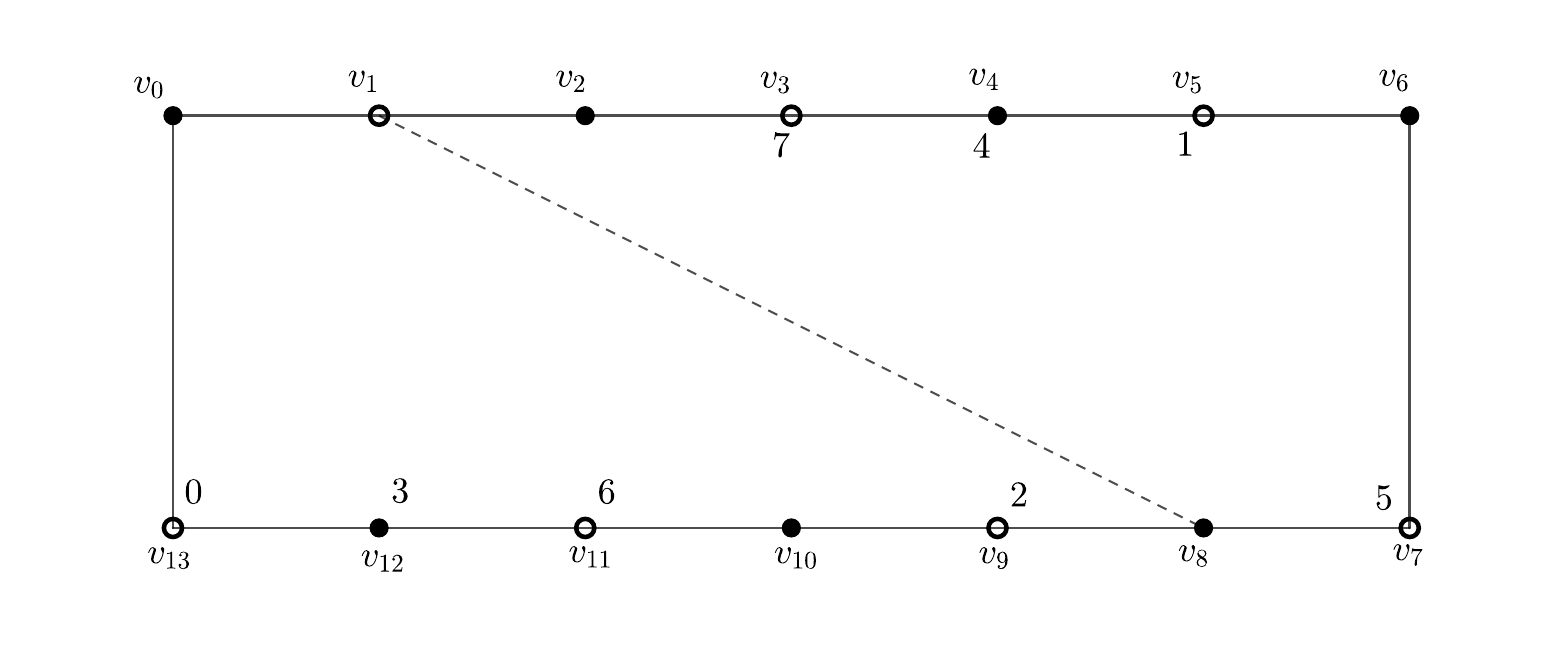}
\end{center}
It can then be seen that the labeling $\lambda$ so defined induces an optimal extended irregular dominating set.
\end{example}

We then summarize these results:

\begin{corollary}
Let $n$ be a positive integer, where either
\begin{itemize}
\item $n\geq 7$ is a prime,
\item $n$ is an odd integer with $n\neq 9$ and $7\leq n \leq 99$,
	\item $n = 2^kt+1$ is a prime power, where $t>1$ is an odd integer,
	\item $n = 16t^2+1$, or
 	\item $\gcd(n,6) =1$ and either $n \neq 0 \pmod{5}$ or $n = 0 \pmod{125}$.
\end{itemize}
Then, there exists an optimal extended irregular dominating set of $C_n$ and $C_{2n}$.
\end{corollary}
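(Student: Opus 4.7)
The plan is purely organizational: this corollary bundles the pieces established earlier in the section. Fix $n$ satisfying one of the five hypotheses. A short check shows that in every case $n$ is odd (explicitly so for $\gcd(n,6)=1$, for primes $p \geq 7$, and for odd integers in the stated range; the two parametric prime-power families under consideration produce odd prime powers). The overall strategy is then: exhibit a strong starter in $\Z_n$, apply the equivalence proposition to obtain an optimal extended irregular dominating set of $C_n$, and invoke Proposition \ref{prop:doubling} to lift it to $C_{2n}$.

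For each hypothesis I would point to the source of the required strong starter. The three parametric families (Mullin–Nemeth, Chong–Chang–Dinitz, and $\gcd(n,6)=1$ with the congruence condition modulo $5$) are exactly the skew starter families listed in Theorem \ref{thm:skew_starters}; since every skew starter is automatically strong, these cases follow immediately. The range $7 \leq n \leq 99$, $n$ odd, $n\neq 9$ is the content of Stinson's computer verification, already recorded as Corollary \ref{cor:Cnodd}.

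The only piece worth writing down is the prime case $n = p \geq 7$, which is not directly one of the parametric families. Write $p-1 = 2^k t$ with $t$ odd. If $t>1$ then $p$ is a Mullin–Nemeth prime power and Theorem \ref{thm:skew_starters} applies. Otherwise $t = 1$, so $p = 2^k + 1$ is a Fermat prime; since $p \geq 7$ this forces $p \geq 17$ and $p = 2^{2^m}+1$ for some $m \geq 2$, and one checks that $p = 16 s^2 + 1$ with $s = 2^{2^{m-1}-2}$, placing $p$ into the Chong–Chang–Dinitz family. In either case Theorem \ref{thm:skew_starters} furnishes a skew starter in $\Z_p$, hence a strong starter.

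With a strong starter in $\Z_n$ secured, the equivalence proposition delivers an optimal extended irregular dominating set of $C_n$, and Proposition \ref{prop:doubling} transfers this to $C_{2n}$. There is no genuine obstacle in the argument; the only mild verification required is the Fermat-prime dichotomy above that shows every prime $p \geq 7$ belongs either to the Mullin–Nemeth or the Chong–Chang–Dinitz family.
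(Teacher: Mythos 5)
Your proposal is correct and follows essentially the same route as the paper, which states this corollary without proof as a summary assembling the strong-starter equivalence, Theorem \ref{thm:skew_starters}, Stinson's verification for odd $7\leq n\leq 99$, $n\neq 9$, and Proposition \ref{prop:doubling}. Your explicit Fermat-prime dichotomy for the prime case (every prime $p\geq 7$ falls into the Mullin--Nemeth family unless $p=2^{2^m}+1$, which then lies in the Chong--Chang--Dinitz family) is a correct spelling-out of what the paper simply attributes to a remark of Stinson.
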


\section{A class of non vertex-transitive graphs: the paths}\label{sec:paths}
Obviously it makes sense to consider the irregular extended domination problem also for non vertex-transitive graphs.  In this section we focus on paths.  
By $P_n$ we denote the path on $n$ vertices, that we also write as its list of vertices $[x_1,x_2,\dotsc,x_n]$, where the edges are $\{x_i,x_{i+1}\}$ for every $i \in [1,n-1]$.
First of all we completely solve the existence problem of an extended irregular dominating set for a path, then we establish   $\gamma_e(P_n)$ for several values of $n$. To do this, let us see some connection between irregular dominating sets and extended irregular dominating sets for paths.

\begin{remark}\label{rem:removek}
Let $S$ be a $k$-irregular dominating set of $P_n$. If there exists a vertex $v$ dominating a unique vertex $u$ of $\G$ and $u\not\in S$, then  there also exists a $k$-extended irregular dominating set of $P_n$. In fact it is sufficient to remove the label from $v$ and to label $u$ by $0$.
\end{remark}

\begin{remark}\label{rem:add0}
    If there exists a $k$-irregular dominating labeling, say $\lambda$, of the path $P_n=[x_1,x_2,\ldots,x_n]$, then there exists a $(k+1)$-extended irregular dominating labeling, say $\lambda'$, of $P_{n+1}=[x_1,x_2,\ldots,x_n,x_{n+1}]$. In fact it is sufficient to extend $\lambda$ by labeling the vertex $x_{n+1}$ by $0$. 
      Unfortunately, if $\lambda$ is optimal, this does not necessarily imply that $\lambda'$ is optimal too.
\end{remark}
Clearly, the same reasoning can be applied also to other classes of graphs.

\begin{example}
In \cite{BCZ21}  the following $6$-irregular dominating labeling of $P_8$ is presented $$ [\square ,5,3,1,4,2,\square, 6],$$
where $\square$ denotes a vertex with no label, and it is proved that it is optimal, that is $\gamma_i(P_8)=6$.
Such a labeling can be extended to the following $7$-extended irregular dominating labeling of $P_9$: $$\quad [\square ,5,3,1,4,2,\square, 6,0]$$
which is not optimal in fact there exists a  $6$-extended irregular dominating labeling of $ P_9$: $$ \quad[\square, \square ,2, \square, 3, 0, 4, 1, 5 ].$$ 

\end{example}

The existence problem for an irregular dominating set of a path has been completely solved in \cite{BCZ21}, where the authors proved the following.
\begin{proposition}\label{prop:BCZ}
The path $P_n$  has an irregular dominating labeling if and only if $n\geq4$ except for $n=6$.
\end{proposition}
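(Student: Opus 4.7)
My plan is to split Proposition \ref{prop:BCZ} into a non-existence half and an existence half. For non-existence on $P_1$, $P_2$, and $P_3$, the argument is immediate because $\mathrm{diam}(P_n)\le 2$ forces every admissible label to lie in $\{1,2\}$, and a short enumeration shows that no choice of distinct positive labels covers every vertex (in $P_3$, the label $1$ on the middle vertex covers the endpoints but not itself, while the label $2$ can only sit at an endpoint and covers a single vertex). The serious non-existence statement is for $P_6$, where labels lie in $[1,5]$. I would argue by contradiction, exploiting that label $5$ can only sit at $x_1$ or $x_6$ (and dominates exactly one vertex), while label $4$ can only sit at one of $x_1,x_2,x_5,x_6$ (and again dominates at most one vertex). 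A case analysis on the presence and position of labels $4$ and $5$, combined with the observation that each labeled vertex must itself be dominated by a vertex of a \emph{different} label, should exhaust all possibilities and in each branch produce an uncovered vertex.

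For the existence half, with $n\ge 4$, $n\ne 6$, the strategy is constructive. I would first exhibit explicit irregular dominating labelings for a finite collection of small base cases, for instance $[2,\square,1,3]$ for $P_4$ and $[4,\square,1,3,2]$ for $P_5$, and analogous labelings for a block of small values beyond $n=6$. For the infinite tail I would look for a periodic or recursive construction: starting from a valid labeling of some $P_n$, append a fixed block of new vertices at one end, equipped with fresh labels (larger than the ones already used and within the new diameter) chosen so that the new block's vertices dominate one another together with the attachment vertex, without disturbing the vertices already covered by the old labeling. Iterating this extension starting from sufficiently many base cases covers every residue modulo the period, and hence every $n\geq n_0$ for some threshold $n_0$; the finite list of small cases fills in the remaining values up to $n_0$.

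The main obstacle I anticipate is the non-existence proof for $P_6$. The cases $P_1,P_2,P_3$ are trivial and the gluing argument for the existence half is mostly bookkeeping, but in $P_6$ the number of candidate labelings is not small, and the analysis must simultaneously track which vertices remain to be covered and which labels have been spent. It therefore needs to be organized carefully, typically by first fixing the position of label $5$ (up to the reflection symmetry of $P_6$, one may assume $5$ sits at $x_6$ or is not used at all), then branching on the placement of label $4$, and only afterwards considering how labels $1,2,3$ might cover the still-undominated vertices together with the labeled vertices themselves.
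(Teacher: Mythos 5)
The paper does not prove this proposition at all: it is quoted verbatim from Broe, Chartrand and Zhang \cite{BCZ21}, and the authors only remark that the proof there is constructive. So there is no internal argument to measure your proposal against; it can only be judged on its own terms. On those terms, the easy parts are fine. Your treatment of $P_1,P_2,P_3$ is correct, and the base labelings you propose do work: in $[2,\square,1,3]$ the vertices $x_3,x_2,x_4,x_1$ are dominated by the labels $2,1,1,3$ respectively, and similarly for $[4,\square,1,3,2]$ on $P_5$. The two places where the actual content lies, however, are left as plans rather than arguments: the non-existence for $P_6$ is only a description of how a case analysis would be organized, and the existence for all $n\ge 7$ is a construction that is never specified.

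One concrete worry about the extension step you sketch. It is true that appending vertices to one end of a path can never destroy coverage already achieved (a labeled vertex only gains new vertices at its distance, so domination is monotone under extension), and this is exactly the observation the paper itself exploits in Remark \ref{rem:add0}. But your plan to cover the appended block using only ``fresh labels larger than the ones already used,'' placed inside the new block, does not work as stated: a vertex in the new block carrying a large label $\ell$ reaches \emph{backwards} a distance $\ell$ into the old part of the path, and typically cannot reach forward at all, so it dominates old vertices rather than its new neighbours. Covering the new vertices forces you either to reuse small labels (forbidden) or to place the new large labels on unlabeled vertices deep inside the old path at precisely calibrated distances, which requires the old labeling to have been designed with this in mind. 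This is consistent with the fact that the construction in \cite{BCZ21} labels almost every vertex ($k=n-4$ labels for $n\ge 10$), which is structurally quite different from appending a sparse block. So the skeleton of your argument is plausible, but both the $P_6$ impossibility and the general construction need to be carried out before this counts as a proof.
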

We point out that the proof is constructive, but the result labeling is not necessarily optimal. Actually, the problem of establishing the exact value of $\gamma_i(P_n)$ is still open.
Previous proposition allows us to prove the following.

\begin{proposition}\label{prop:existencepath}
    Given $n\geq 1$, the path $P_n$ admits an extended irregular dominating labeling if and only if $n\neq2,3$.
\end{proposition}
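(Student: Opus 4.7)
The plan is to split into cases according to $n$, combining a non-existence argument for small $n$ with two complementary constructive techniques for $n\geq 4$. For $n=1$ the single vertex labeled $0$ is trivially a $1$-extended irregular dominating set. For $n=2,3$, I invoke Proposition~\ref{prop:k12} together with the observation recorded just after Corollary~\ref{cor:completo} (that $k=1$ forces an isolated vertex): any extended irregular dominating set must have $k=1$ or $k\geq 4$, and neither possibility is realizable in a path on only $2$ or $3$ vertices. This handles the ``only if'' direction.

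For the ``if'' direction with $n\geq 5$ and $n\neq 7$, I bootstrap from the irregular dominating case. Since $n-1\geq 4$ and $n-1\neq 6$, Proposition~\ref{prop:BCZ} provides an irregular dominating labeling of $P_{n-1}$, and Remark~\ref{rem:add0} converts it into an extended irregular dominating labeling of $P_n$ by appending a new vertex labeled $0$. This produces extended labelings for all $n\in\{5,6,8,9,10,\dots\}$ in one stroke.

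The two remaining values, $n=4$ (where $n-1<4$) and $n=7$ (where $n-1=6$ is precisely the exception in Proposition~\ref{prop:BCZ}), require ad hoc constructions. For $P_4=[x_1,x_2,x_3,x_4]$ I propose the labeling $(2,0,1,3)$: then $x_1$ is dominated by $x_4$ (label $3$, distance $3$), $x_2$ by itself, $x_3$ by $x_1$ (label $2$, distance $2$), and $x_4$ by $x_3$ (label $1$, distance $1$). For $P_7=[x_1,\dots,x_7]$ I propose $(6,1,4,2,0,5,3)$; a direct check shows $x_1$ is dominated by $x_6$, $x_2$ by $x_4$, $x_3$ by $x_2$, $x_4$ by $x_7$, $x_5$ by itself, $x_6$ by $x_4$, and $x_7$ by $x_1$. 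The main obstacle is not conceptual but rather bookkeeping: one has to notice that the bootstrap from Proposition~\ref{prop:BCZ} leaves exactly the two small gaps $n=4$ and $n=7$, and patch them explicitly; overlooking either gap would break the equivalence.
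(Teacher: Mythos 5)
Your proposal is correct and follows essentially the same route as the paper: non-existence for $n=2,3$ via Proposition~\ref{prop:k12}, the bootstrap from Proposition~\ref{prop:BCZ} through Remark~\ref{rem:add0} for $n\geq 5$ with $n\neq 7$, and explicit labelings to patch the two gaps $n=4$ and $n=7$. Your labelings $(2,0,1,3)$ and $(6,1,4,2,0,5,3)$ differ from the paper's $[3,1,0,2]$ and $[\square,0,2,3,1,\square,4]$ but both verify correctly, so this is only a cosmetic difference.
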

\begin{proof}
  Since $P_2$ and $P_3$ do not have an extended irregular dominating labeling by Proposition \ref{prop:k12}, it remains to verify the converse.
  For $n=1$ the existence is trivial. An extended irregular dominating labeling for the paths  $P_{4}$ and $P_7$ is given by:
\[
\begin{aligned}
&P_{4}&: \quad& [3,1,0,2], \\  
&P_{7}&: \quad&[\square , 0, 2, 3, 1, \square , 4 ],
\end{aligned}
\] 
where $\square$ denotes a vertex with no label.
The other cases follow from the Remark \ref{rem:add0} and Proposition \ref{prop:BCZ}.    
\end{proof}

By Remark \ref{rem:add0} the extended irregular dominating labelings of previous proposition are, in general, not optimal.
In particular, in \cite{BCZ21},  a $k$-irregular dominating labeling of $P_n$ is constructed with $k=n-2$ for $n=7,8$, with $k=n-3$ for $n=9$, and with $k=n-4$ for every $n\geq10$. Starting for these labelings, we immediately have a $k'$-extended irregular dominating labeling of $P_{n+1}$ with $k'=k+1$. On the other hand, we believe that this result can be improved, that is that there exists a $k''$-extended irregular dominating labeling of $P_{n+1}$ for $k''<k'$.

In the remaining part of this section, we make some consideration on $\gamma_e(P_n)$, we start with a lemma whose proof is trivial.

\begin{lemma}\label{lemma:lowerbound}
For every $n\geq1$, $\gamma_e(P_n)\geq \lceil\frac{n+1}{2}\rceil$.
\end{lemma}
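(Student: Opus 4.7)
The plan is to exploit the fact that in a path, each vertex at positive distance $\ell$ from a given vertex $v$ lies either strictly to the left or strictly to the right of $v$, so there can be at most two vertices at distance exactly $\ell$ from $v$ in $P_n$ (one on each side, and fewer if $v$ is near an endpoint). Therefore any vertex of the set labeled with a positive integer dominates at most $2$ vertices.

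Let $S$ be any $k$-extended irregular dominating set of $P_n$. By definition, exactly one vertex of $S$ carries the label $0$ and dominates only itself, contributing $1$ to the count of dominated vertices. The remaining $k-1$ vertices of $S$ carry distinct positive labels, and by the observation above each of them covers at most $2$ vertices. Hence the total number of vertices covered by $S$ is at most
\[
1 + 2(k-1) = 2k-1.
\]

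Since $S$ is an extended irregular dominating set, every vertex of $P_n$ must be covered, so $2k-1 \geq n$, which immediately yields $k \geq \lceil (n+1)/2 \rceil$. As this holds for every extended irregular dominating set, we obtain $\gamma_e(P_n) \geq \lceil (n+1)/2 \rceil$. There is no real obstacle here: the only step requiring any care is checking that a positive label $\ell$ covers at most two vertices in $P_n$, which follows from the fact that the distance from a vertex in a path is realised by at most one vertex on each side.
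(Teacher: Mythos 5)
Your proof is correct, and the paper itself gives no argument here (it describes the lemma as having a trivial proof); your counting — one vertex covered by the label $0$ plus at most two vertices per positive label, forcing $2k-1\geq n$ — is exactly the intended reasoning. No issues.
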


In the following result we establish when the equality holds in Lemma \ref{lemma:lowerbound}.

\begin{theorem}\label{thm:path}
Let $n\geq1$,
$\gamma_e(P_n)= \lceil\frac{n+1}{2}\rceil$  if and only if $n = 1,6,10$.
\end{theorem}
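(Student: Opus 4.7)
The lower bound $\gamma_e(P_n)\ge\lceil (n+1)/2\rceil$ is Lemma \ref{lemma:lowerbound}, so the plan has two halves: exhibit optimal labelings for $n\in\{1,6,10\}$ and rule out tightness elsewhere. For the constructions I use the trivial labeling for $P_1$, the labeling $[\square,0,2,\square,1,3]$ for $P_6$, and $[7,\square,0,2,4,1,3,\square,\square,\square]$ for $P_{10}$; direct inspection shows each vertex is dominated exactly once.

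For the impossibility direction, the cases $n\in\{2,3\}$ are handled by Proposition \ref{prop:existencepath} and $n\in\{4,5\}$ by Proposition \ref{prop:k12}. Fix $n\ge 7$ with $n\ne 10$ and suppose an EIDS of size $k=\lceil (n+1)/2\rceil$ exists. A vertex with label $\ell\ge 1$ dominates at most two vertices of $P_n$---exactly two only if $\ell\le\lfloor(n-1)/2\rfloor$ and its position lies in $[\ell+1,n-\ell]$---so a counting argument yields tightness: every positive label dominates exactly two vertices, every vertex is dominated exactly once, and the label set is $\{0,1,\ldots,(n-1)/2\}$ when $n$ is odd, or $\{0,1,\ldots,n/2-1,\ell^*\}$ with a single extra label $\ell^*\in[n/2,n-1]$ when $n$ is even.

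The core is a \emph{forcing chain} processing the labels in decreasing order. For $n$ odd, $n\ge 7$, label $(n-1)/2$ has the unique admissible position $(n+1)/2$ covering $\{x_1,x_n\}$, and then label $(n-3)/2$ has only positions $(n-1)/2$ and $(n+3)/2$ available, each re-covering an endpoint---contradiction. For $n=2m$ even, by path-reflection symmetry assume $i_{m-1}=m$; conflict-avoidance then forces $i_{m-2}=m+2$, and for $m\ge 5$ also $i_{m-3}=m-1$ and $i_{m-4}=m+1$. The case $n=8$ already fails at this stage. For $n=10$ the chain terminates at $i_1=6$ leaving $\{x_3,x_8\}$ to be absorbed by label $0$ and $\ell^*$ as in the construction. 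For $n\ge 12$ the placement of label $m-5$ splits into $\{m-2,m+3\}$ (swapped by reflection), and in each branch---within at most two further forced placements---a label runs out of admissible positions. Specifically, in the branch $i_{m-5}=m-2$: for $m=6$ the option itself collides with the already-covered $x_5$; for $m=7$ the forced $i_{m-6}=i_1$ has every candidate blocked; and for $m\ge 8$ the chain further forces $i_{m-6}=m+4$ and then label $m-7$ has every candidate position blocked by the covered set
\[
\{x_1,x_2,x_3,x_4,x_5,x_{10}\}\cup\{x_{2m-7},x_{2m-4},x_{2m-3},x_{2m-2},x_{2m-1},x_{2m}\}.
\]

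The main obstacle is the uniform verification of this final step across all $m\ge 6$. I would check position-by-position the at most eight candidates $p\in[m-6,m+7]\setminus\mathrm{used}$ and show that $\{p-j,p+j\}$ (with $j$ the label under consideration) meets the ``left block'' $\{x_1,\ldots,x_5\}$, the ``right block'' $\{x_{2m-4},\ldots,x_{2m}\}$, or one of the interior obstructions $x_{10}$, $x_{2m-7}$, $x_{2m-2}$, producing the contradiction; the complementary subcase $i_{m-5}=m+3$ then follows from the reflection symmetry $x_i\leftrightarrow x_{n+1-i}$.
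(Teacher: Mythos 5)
Your proposal is correct and follows essentially the same route as the paper's proof: the same counting argument pinning down the label multiset and the "every vertex covered exactly once" tightness, the same decreasing-label forcing chain with the same forced positions $x_m, x_{m+2}, x_{m-1}, x_{m+1}, x_{m-2}, x_{m+4}$ and the same terminal obstruction at label $m-7$ (resp.\ at label $1$ for the small even cases), together with the same reflection-symmetry reductions. The only cosmetic differences are your explicit labelings for $P_6$ and $P_{10}$ (the paper gives these in a figure) and a slightly different bookkeeping of which small $m$ are treated ad hoc ($m\ge 6$ generic for you versus $m\ge 8$ in the paper); your final verification step, checking that every candidate position for label $m-7$ meets the covered set $\{x_1,\dots,x_5,x_{10}\}\cup\{x_{2m-7},x_{2m-4},x_{2m-3},x_{2m-2},x_{2m-1},x_{2m}\}$, does go through.
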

\begin{proof}
Suppose firstly $n$ odd, and set $n = 2m+1$ for some integer $m$. If $m = 0$ the result is trivial, so assume that $m\geq 1$. Note that $\gamma_e(P_{2m+1})=m+1$ implies that we have to use the labels from the set $[0,m]$, where every vertex labeled with an integer different from $0$ has to cover $2$ vertices. For this reason, the label $m$ has to be given to the vertex $x_{m+1}$, covering $x_1$ and $x_{2m+1}$.  However, it is not possible to assign the label $m-1$ and cover two vertices, hence an extended irregular dominating set of size $m+1$ cannot exist.
In other words, $\gamma_e(P_n)> \lceil\frac{n+1}{2}\rceil$.

Let now $n = 2m$ for some integer $m$. By Corollary \ref{cor:completo}, $P_2$ and $P_4$ do not admit an extended irregular dominating set, while for $P_6$ and $P_{10}$ we have the following labelings: 
 
\begin{center}
\includegraphics[width = \textwidth]{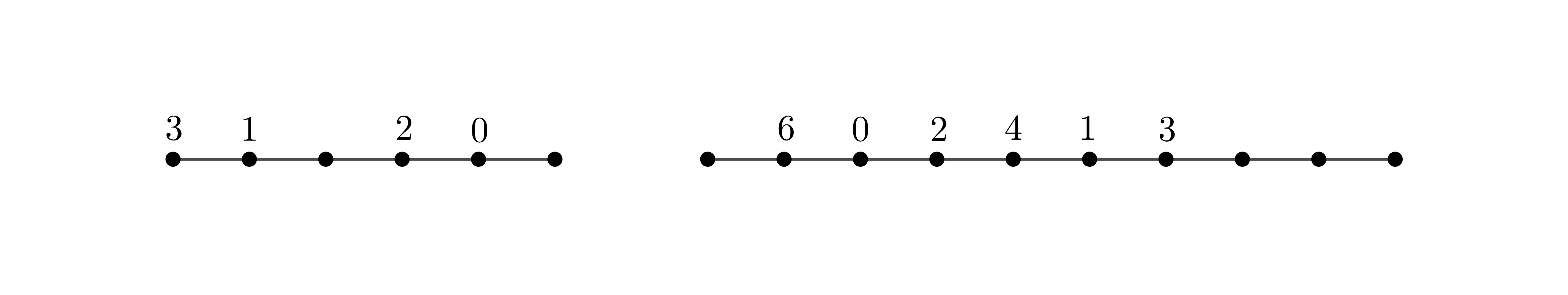}
\end{center}
Hence $\gamma_e(P_6)=4$ and  $\gamma_e(P_{10})=6$ hold.

Assume now that $m \geq 8$. Also here, $\gamma_e(P_{2m})=m+1$ implies we have to use labels in the set $[0,m-1]\cup{\bar\lambda}$, where $\bar\lambda \in [m,2m-1]$ in such a way that
the vertices receiving labels $0$ and $\bar\lambda$ cover exactly one vertex, and the vertices receiving labels from $[1,m-1]$ have to cover two vertices. To this aim, it is easy to see that  the label $m-1$ has to be assigned either to $x_m$ or to $x_{m+1}$. By symmetry we can assume without loss of generality that this label is assigned to $x_m$, thus covering $x_1$ and $x_{2m-1}$. This assignment forces the following labeling:
\begin{itemize}
	\item[$(1)$] the label $m-2$ has to be assigned to $x_{m+2}$, covering $x_4$ and $x_{2m}$; 
	\item[$(2)$] the label $m-3$ has to be assigned to $x_{m-1}$, dominating $x_2$  and $x_{2m-4}$;
	\item[$(3)$] the label $m-4$ has to be assigned to $x_{m+1}$, covering $x_5$ and $x_{2m-3}$.
\end{itemize} 
To summarize, up to this point, the vertices labeled are $\{x_{m-1},x_{m},x_{m+1},x_{m+2}\}$, and they dominate the set of vertices $\{x_1,x_{2},x_4,x_5\} \cup \{x_{2m-4},x_{2m-3},x_{2m-1},x_{2m}\}$, as shown in the following:
\begin{center}
\includegraphics[width = \textwidth]{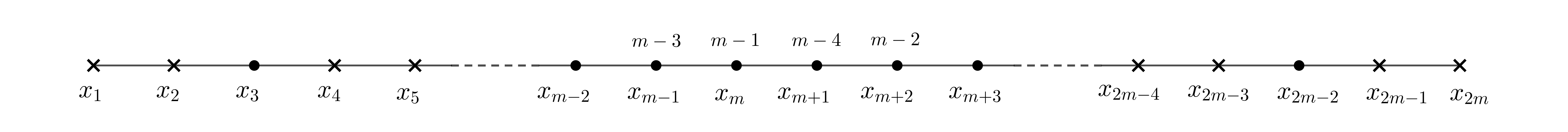}
\end{center}

It can then be seen that the label $m-5$ has to be assigned either to $x_{m-2}$ or $x_{m+3}$; by symmetry, it is not again restrictive to assume that this label is given to $x_{m-2}$, thus dominating $x_3$ and $x_{2m-7}$. Then, the label $m-6$ has to be given to $x_{m+4}$, covering $x_{10}$ and $x_{2m-2}$. However, it can be seen that now there is no possible assignment of the label $m-7$ to cover two of the remaining vertices, hence proving that for $m\geq 8$ there is no $k$-extended irregular dominating labeling of the path of even order with $k=\lceil\frac{n+1}{2}\rceil$.

To conclude, for $m=4,6,7$ repeat then the procedure shown before until the label $2$ is assigned. It can then be seen that it is not possible to assign the label $1$ to any vertex of the path.
\end{proof}

\begin{corollary}\label{cor:n+3/2}
Let $n\geq 4$ with $n\neq 6,10$, then $\gamma_e(P_n)\geq \lceil\frac{n+3}{2}\rceil.$
\end{corollary}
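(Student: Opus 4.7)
The plan is to combine Lemma \ref{lemma:lowerbound} with Theorem \ref{thm:path} directly. Lemma \ref{lemma:lowerbound} gives the lower bound $\gamma_e(P_n)\geq \lceil\frac{n+1}{2}\rceil$ for all $n\geq 1$, while Theorem \ref{thm:path} tells us that equality holds only when $n \in \{1,6,10\}$. Therefore, for $n\geq 4$ with $n\neq 6,10$, the inequality must be strict, so $\gamma_e(P_n)\geq \lceil\frac{n+1}{2}\rceil+1$.

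It then suffices to verify the numerical identity $\lceil\frac{n+1}{2}\rceil+1 = \lceil\frac{n+3}{2}\rceil$. This is a short case split by parity: if $n=2m$, then $\lceil\frac{n+1}{2}\rceil+1 = (m+1)+1 = m+2 = \lceil\frac{n+3}{2}\rceil$; if $n=2m+1$, then $\lceil\frac{n+1}{2}\rceil+1 = (m+1)+1 = m+2 = \lceil\frac{n+3}{2}\rceil$. In both cases the identity holds, and substituting back yields the desired bound.

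There is no real obstacle here since all the substantive work has already been carried out in the proof of Theorem \ref{thm:path}; the corollary is simply a repackaging of the ``strict inequality'' consequence. The only thing to be careful about is that the excluded values $n=1,6,10$ from Theorem \ref{thm:path} match up with the hypotheses of the corollary: the case $n=1$ is excluded by the assumption $n\geq 4$, while $n=6,10$ are excluded explicitly.
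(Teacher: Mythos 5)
Your proof is correct and is exactly the argument the paper intends (the corollary is stated without proof, as an immediate consequence of Lemma \ref{lemma:lowerbound} and Theorem \ref{thm:path}): strictness of the bound for $n \notin \{1,6,10\}$ plus the ceiling identity $\lceil\frac{n+1}{2}\rceil+1 = \lceil\frac{n+3}{2}\rceil$. Your parity check of that identity and the matching of the excluded cases are both accurate.
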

The next natural question is the following: when does the equality hold in Corollary \ref{cor:n+3/2}?

Clearly it holds for $n=4$, see the labeling of $P_4$ given in the proof of Proposition \ref{prop:existencepath}.
Below we show that the equality holds for every $n\in[5,26]\setminus\{6,10\}$. Some of these optimal extended irregular dominating labeling of $P_n$ have been obtained thanks to
to Remarks \ref{rem:removek} and \ref{rem:add0} starting from an irregular dominating labeling of $P_n$ constructed in \cite{Broe}. 
\[
\begin{aligned}
&P_{5}&: \quad& [3,1,\square,2,0], \\ 
&P_7&: \quad& 
[6,0,2,\square,1,3,\square] \\
&P_{8}&: \quad &
[\square,\square,3,1,4,2,0,6]\\
&P_{9}&: \quad &
[3,6,0,2,4,1,\square,\square,\square]\\
&P_{11}&: \quad& [\square,\square,5,3,1,4,2,\square,\square,7,0],\\
&P_{12}&: \quad&
[0,10,\square,5,3,1,4,2,\square,\square,7,\square], \\
&P_{13}&: \quad&
[\square,5,\square,\square,1,4,6,3,0,2,\square,9,\square], \\
&P_{14}&: \quad& [7,\square,\square,\square,2,4,6,3,5,1,\square,0,\square,8], \\  
&P_{15}&: \quad&[\square,\square,\square,\square,1,3,5,7,4,\square,0,2,\square,6,8 ],\\
&P_{16}&: \quad&[\square,9,\square,\square,1,3,5,7,4,\square,\square,2,\square,6,8,0 ],\\
&P_{17}&: \quad&[8,\square,\square,\square,2,5,3,\square,7,4,6,0,\square,1,\square,\square,9 ],\\
&P_{18}&: \quad&[8,10,\square,\square,2,5,3,\square,7,4,6,\square,\square,1,\square,\square,9,0 ],\\
&P_{19}&: \quad& [\square,\square,\square,13,\square,3,6,4,1,8,5,7,2,0,\square,\square,10,\square,\square],\\
&P_{20}&: \quad&
[\square,\square,\square,13,9,3,6,4,1,8,5,7,2,\square,\square,\square,10,\square,\square,0], \\
&P_{21}&:\quad&[\square,10,\square,\square,\square, 1, \square, 5,7,9,3,8,2,4,6,\square,0,12,\square,\square,\square] \\
&P_{22}&: \quad&[\square,\square,\square,\square,\square,\square,4,7,5,3,9,6,8,2,\square,\square,\square,1,11,10,12,0],\\
&P_{23}&: \quad&[12,\square,\square,\square,\square,3,0,4,8,10,5,7,9,1,6,2,\square,\square,\square,\square,11,\square],\\
&P_{24}&: \quad&[12,\square,\square,\square,\square,3,\square,4,8,10,5,7,9,1,6,2,\square,\square,\square,13,11,\square,0],\\
&P_{25}&: \quad&[\square,12,\square,\square,\square,\square,3,\square,4,8,10,5,7,9,1,6,2,\square,\square,\square,13,11,\square,0],\\
&P_{26}&: \quad&[14,12,\square,\square,\square,\square,4,1,\square,6,9,11,5,8,10,3,7,\square,2,\square,\square,\square,\square,\square,13,0],\\
\end{aligned}
\] 
where $\square$ denotes a vertex with no label.\\
\\

\section{Conclusions}
In this paper we have introduced the concept of an extended irregular dominating set focusing, in particular, on the optimal case.
In some cases we presented complete solution to existence problem of an optimal
extended irregular dominating set for some classes of graphs, while for other ones we have only some partial results.

For example note that in Section \ref{sec:cycles} about cycles, we have not considered the class of cycles $C_n$, with $n\equiv 0\pmod 4$.
A direct check shows that there exists no extended irregular dominating set of $C_n$ for $n=4,8$, while here we report an optimal extended irregular dominating labeling for the cycles $C_{4n}$ with $n \in [3,6]$:
\[
\begin{aligned}
&C_{12}&: \quad&(6,3,\square,\square,1,4,\square,5,0,2,\square,\square), \\  
&C_{16}&: \quad&(0,\square,\square,\square,\square,7,4,1,\square,\square,3,6,8,2,5,\square), \\
&C_{20}&: \quad&(10,8,\square,3,\square,9,1,\square,0,6,\square,7,\square,\square,\square,4,\square,5,\square,2), \\
&C_{24}&: \quad&(0,11,8,2,12,1,\square,\square,9,6,\square,\square,\square,\square,5,7,\square,4,\square,\square,\square,10,\square,3), \\
\end{aligned}
\] 
where the examples for $C_{16}$ and $C_{24}$ have been found with the aid of a computer by Falc\'{o}n \cite{F}.
At the moment we have no further results for this class of graphs, on the other hand for $n\in[3,6]$ we obtain really many labelings of $C_{4n}$ satisfying the required properties, hence we believe that $n=1,2$ are the only exception and we propose the following.
\begin{conjecture}
There exists an optimal extended irregular dominating labeling for $C_{4n}$ for every $n\geq3$.
\end{conjecture}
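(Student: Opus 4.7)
My approach would begin by translating the existence of an optimal extended irregular dominating labeling of $C_{4n}$ into a combinatorial partition problem on $\Z_{4n}$, in the spirit of the strong-starter correspondence developed in Section \ref{sec:cycles}. By Corollary \ref{cor:k=radius}, such a labeling uses every label in $[0,2n]$ on exactly $2n+1$ vertices, and every vertex of $\Z_{4n}$ is dominated exactly once. Writing $c$ for the position of label $2n$ and $d$ for the position of label $0$, this is equivalent to partitioning $\Z_{4n}\setminus\{d,c+2n\}$ into $2n-1$ pairs whose cyclic short-arc widths form the multiset $\{2,2,4,4,\ldots,2n-2,2n-2,2n\}$, together with the requirement that the midpoints of these pairs — each chosen on either the short or the far arc — are distinct from $c,d$ and from one another. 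The multiset structure arises because labels $\ell$ and $2n-\ell$ yield pairs of equal width $2\min(\ell,2n-\ell)$, so each even width below $2n$ must be hit exactly twice, while the diameter width $2n$ occurs exactly once.

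With this reformulation, I would pursue a construction split by the parity of $n$. For $n$ odd the decomposition $\Z_{4n}\cong\Z_4\oplus\Z_n$ is available, and I would attempt to lift a strong starter of $\Z_n$ (guaranteed for odd $n\geq 7$, $n\neq 9$ by Corollary \ref{cor:Cnodd}) to a design in $\Z_{4n}$ by combining it with a carefully chosen base pattern on $\Z_4$ tuned to generate each width in the required multiset twice — once contributing the short-midpoint label and once the far-midpoint label. For $n$ even the CRT decomposition is unavailable, and I would instead seek a doubling-type construction analogous to Proposition \ref{prop:doubling}, bootstrapping from an optimal labeling of $C_{4m}$ for a smaller $m$ dividing $n$, or else build ad hoc constructions depending on $n\pmod 4$. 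Small cases (say $n\leq 20$) should be verified by direct computer search, extending the explicit examples already given for $n\in[3,6]$.

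The main obstacle I foresee is the \emph{midpoint coordination}: for each width $2\ell\in\{2,4,\ldots,2n-2\}$ one must produce two pairs, one assigned its short midpoint (yielding label $\ell$) and the other its far midpoint (yielding label $2n-\ell$), and these choices interact nontrivially across all $n-1$ widths so that each label in $[1,2n-1]$ is used exactly once, the $2n-1$ midpoint positions are pairwise distinct, and they avoid the singleton positions $c$ and $d$, the latter being further constrained since $c+2n$ is one of the two vertices not covered by pairs. In the odd-cycle setting the classical strong-starter axioms build exactly this rigidity into one clean definition, whereas the short/far midpoint freedom for $C_{4n}$ is simultaneously what makes existence plausible for every $n\geq 3$ and what defeats a uniform one-line characterization. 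I expect the even-$n$ case to be the hardest, and a complete solution likely to require a mixture of algebraic constructions for structured residue classes of $n$ together with computational verification of the exceptional residues.
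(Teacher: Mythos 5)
This statement is a \emph{conjecture} in the paper: the authors offer no proof, only explicit labelings of $C_{4n}$ for $n\in[3,6]$ (partly computer-generated) and the observation that $n=1,2$ fail. Your submission likewise does not prove it. Your opening reformulation is essentially correct and worth recording: by Corollary \ref{cor:k=radius} the labels are exactly $[0,2n]$, each label $\ell\in[1,2n-1]$ covers a pair at cyclic distance $2\min(\ell,2n-\ell)$, the labels $\ell$ and $2n-\ell$ produce the same width, and the widths $2,4,\dotsc,2n-2$ each occur twice while $2n$ occurs once; the positions of $0$ and $2n$ account for the two vertices not covered by pairs. This is a sensible analogue of the strong-starter correspondence of Section \ref{sec:cycles}. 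But everything after that is a plan, not an argument, and the plan has concrete holes.

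First, the proposed CRT lift $\Z_{4n}\cong\Z_4\oplus\Z_n$ for odd $n$ is not accompanied by any actual ``base pattern on $\Z_4$,'' and it is far from clear one exists: the target multiset of widths $\{2,2,4,4,\dotsc,2n-2,2n-2,2n\}$ is defined by the natural cyclic order on $\Z_{4n}$, which the CRT isomorphism destroys, so a strong starter of $\Z_n$ (Corollary \ref{cor:Cnodd}) does not obviously transport to the required structure. Second, for even $n$ you invoke a ``doubling-type construction analogous to Proposition \ref{prop:doubling},'' but that proposition depends essentially on $n$ being odd (it uses that exactly one of $x$ and $n-x$ is odd, and that $2$ is invertible); doubling $C_{2m}$ with $m$ even runs into exactly the parity obstruction the proposition avoids, and you give no substitute. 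Third, the ``midpoint coordination'' you correctly identify as the crux is left entirely unresolved — you state the constraint system but exhibit no solution for any $n\geq 7$. As it stands, your proposal establishes nothing beyond the cases $n\in[3,6]$ already in the paper; to make progress you would need to produce at least one infinite family of explicit constructions and verify the midpoint-distinctness conditions for it.
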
 

About paths we established that $\gamma_e(P_n)= \lceil\frac{n+1}{2}\rceil$  if and only if $n = 1,6,10$, and hence that  $\gamma_e(P_n)\geq \lceil\frac{n+3}{2}\rceil$ for $n\geq 4$ with $n\neq 6,10$. Also we proved that the equality holds for every $n\in[4,26]\setminus\{6,10\}$. This leads us to propose another conjecture.
\begin{conjecture}
Let $n\geq 4$.
\[
\gamma_e(P_n) = \left\{ \begin{aligned}
&\left\lceil\frac{n+1}{2}\right\rceil \quad \text{ if $n=6,10$,} \\
&\left\lceil\frac{n+3}{2}\right\rceil \quad \text{otherwise.} \\ 
\end{aligned}\right.
\]
\end{conjecture}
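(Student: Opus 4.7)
The lower bound $\gamma_e(P_n) \geq \lceil (n+3)/2 \rceil$ for $n \geq 4$ with $n \neq 6, 10$ is already established by Corollary~\ref{cor:n+3/2}, and the equality cases $n=6,10$ are part of Theorem~\ref{thm:path}. Hence the conjecture reduces to exhibiting, for every $n\geq 4$ with $n\neq 6,10$, a $\lceil(n+3)/2\rceil$-extended irregular dominating labeling of $P_n$. The explicit list given in the paper settles this for $n \in [4,26]\setminus\{6,10\}$, so the remaining task is to produce such a labeling for all $n \geq 27$.

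The plan is to set up a pair of parallel inductions, one on even $n$ and one on odd $n$, each with step size $2$. A step from $P_n$ to $P_{n+2}$ must introduce exactly one new label, which is the correct increment since $\lceil(n+5)/2\rceil - \lceil(n+3)/2\rceil = 1$. Concretely, given an optimal labeling of $P_n = [x_1, \ldots, x_n]$, I would attach two new vertices $x_{n+1}, x_{n+2}$ and attempt the following: keep most of the old labels in place; relocate one or two small labels near the right end so that the existing labels still cover $\{x_1, \ldots, x_n\}$ while now also reaching $x_{n+1}$ or $x_{n+2}$ using their (slightly larger) new distances; and finally introduce one new label of value in $\{n, n+1\}$, placed on a vertex close to one end so as to witness the coverage of whichever of $x_{n+1}, x_{n+2}$ is not yet dominated after the rearrangement. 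As base cases one would select labelings from the given list that admit a convenient tail structure amenable to extension. A practical shortcut is to prove a slightly stronger statement by induction: not only does an optimal labeling exist, but it can be chosen so that the last few vertices follow a prescribed pattern (for instance, $x_n$ carries the label $0$, or $x_{n+1}$ sits opposite a labeled vertex of controlled value). The strengthened hypothesis then feeds the inductive step cleanly.

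The main obstacle is precisely the rigidity uncovered in the proof of Theorem~\ref{thm:path}: because each positively-labeled vertex essentially must cover two vertices with only a couple of slots of slack, any local modification near the new boundary ripples backwards and can force conflicts with labels already used. The label interval $[0,n-1]$ is already almost saturated by the $\lceil(n+3)/2\rceil$ labels actually chosen (compare $P_{13}$ and $P_{16}$, where only a handful of integers in $[0,n-1]$ are omitted), so the inductive step has very little room to introduce a fresh label while preserving distinctness. I expect that a full proof will require splitting $n$ into several residue classes modulo a small period (say $4$ or $8$) and designing a tailored two-vertex extension gadget for each class, followed by a nontrivial case analysis verifying that the freshly introduced label can always be chosen to avoid the labels already in use and that no old vertex loses its unique dominator. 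It is this global distinctness-plus-coverage interaction, rather than any single local obstruction, that makes a clean uniform construction elusive and that has likely kept the conjecture open.
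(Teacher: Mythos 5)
This statement is posed in the paper as a \emph{conjecture}, not a theorem: the authors prove the lower bound (Corollary~\ref{cor:n+3/2}), characterize the cases $n=6,10$ (Theorem~\ref{thm:path}), and verify equality for $n\in[4,26]\setminus\{6,10\}$ by explicit labelings, but they offer no proof for general $n$. So there is no paper proof to compare against, and your proposal does not supply one either. Your reduction of the problem is correct --- everything hinges on exhibiting a $\lceil(n+3)/2\rceil$-extended irregular dominating labeling of $P_n$ for every $n\geq 27$ --- but from that point on the argument is only a plan. The inductive step from $P_n$ to $P_{n+2}$ is never actually defined: you do not specify which labels get relocated, where the new label of value in $\{n,n+1\}$ is placed, or why the relocation preserves coverage of all of $x_1,\dotsc,x_n$ while reaching both new vertices. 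The ``strengthened hypothesis'' about a prescribed tail pattern is named but not formulated, and no base case from the list in Section~\ref{sec:paths} is shown to satisfy it. Without a concrete gadget and a verified step, the induction is vacuous.

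Moreover, your own closing paragraph correctly identifies why this strategy is hard to execute: the near-saturation of the label set and the global coverage constraints mean a two-vertex extension generically breaks distinctness or uniqueness of domination somewhere far from the appended end. That observation is an argument \emph{against} the feasibility of the sketched induction as stated, not a step toward completing it. In short, the gap is not a single missing lemma but the entire construction for $n\geq 27$; the statement remains, as in the paper, an open conjecture.
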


\section*{Acknowledgements}
The authors are partially supported by INdAM - GNSAGA.
The authors would like to thank Prof. Giuseppe Mazzuoccolo for the fruitful discussion on the topic.

\end{document}